\numberwithin{equation}{section}
\newtheorem{theorem}{Theorem}[section]
\newtheorem{lemma}[theorem]{Lemma}
\newtheorem{prop}[theorem]{Proposition}
\newtheorem{remark}[theorem]{Remark}
\newtheorem{define}{Definition}[section]
\newcommand\pd{{\partial}}
\newcommand{\R}{{\mathbb{R}}}
\begin{document}

\title[Prandtl equations in three space variables]
{Global existence of weak solutions to the three-dimensional Prandtl equations with A special structure}

\author{C.-J. Liu}
\address{Cheng-Jie Liu
\newline\indent
Department of Mathematics, Shanghai Jiao Tong University,
Shanghai, China, and
\newline\indent
Department of Mathematics, City University of Hong Kong,
Hong Kong, China}
\email{cjliusjtu@gmail.com}

\author{Y.-G. Wang}
\address{Ya-Guang Wang
\newline\indent
Department of Mathematics, MOE-LSC and SHL-MAC, Shanghai Jiao Tong University,
Shanghai, China}
\email{ygwang@sjtu.edu.cn}

\author{T. Yang}
\address{Tong Yang
\newline\indent
Department of mathematics, City University of Hong Kong,
Hong Kong, China, and
\newline\indent
Department of Mathematics, Shanghai Jiao Tong University, Shanghai, China}
\email{matyang@cityu.edu.hk}



\subjclass[2000]{35M13, 35Q35, 76D10, 76D03, 76N20}

\date{}

\keywords{3D Prandtl equations, global existence, weak solutions, monotonic velocity field, favorable pressure.}

\begin{abstract}
The global existence of weak solutions to the three space dimensional
 Prandtl equations is studied under some constraint on its structure.
This
is a continuation of our recent study on the local existence of classical
solutions with the same structure condition.   It reveals
 the sufficiency of the monotonicity condition on one
component of the tangential velocity field and the favorable condition on
pressure in the same direction that leads to global existence of weak solutions.
This generalizes
the result obtained by Xin-Zhang on the two-dimensional Prandtl equations to the three-dimensional setting.
\end{abstract}

\maketitle

\tableofcontents

\section{Introduction}

Consider the initial boundary value problem for the Prandtl
 boundary layer equations in three space variables,
\begin{equation}\label{pr}
\begin{cases}
\partial_t u+(u\partial_x+v\partial_y+w\partial_z) u+\partial_x p=\partial_z^2u,\quad &in~Q,\\
\partial_t v+(u\partial_x+v\partial_y+w\partial_z) v+\partial_y p=\partial_z^2v,\quad &in~Q,\\
\partial_x u+\partial_y v+\partial_z w=0,\quad &in~Q,\\
(u,v,w)|_{z=0}=0, \quad \lim\limits_{z\to+\infty}(u,v)=(U(t,x,y), V(t,x,y)),\\
(u,v)|_{t=0}= \big(u_0(x,y,z), v_0(x,y,z)\big),
\end{cases}\end{equation}
where
\(Q=\{t>0, (x,y)\in D, z>0\}\) with a fixed $D\subset \R^2$,
$(U(t,x,y), V(t,x,y))$ and $p(t,x,y)$ are the traces of the tangential velocity field and the pressure of the Euler
flow on the boundary $\{z=0\}$. Note that the traces satisfy
\begin{equation}\label{euler}
\begin{cases}
  \partial_t U+U\partial_x U+V\partial_y U+\partial_x p=0,\\
  \partial_t V+U\partial_xV+V\partial_yV+\partial_y p=0.
  \end{cases}
\end{equation}

Despite of its importance in physics, there are very few mathematical results on the Prandtl equations in three space variables. In fact, except the recent work \cite{lwy}
about the classical solution with special structure and those in the analytic framework \cite{Samm, CS}, most of the mathemtical studies on this foundamental system in boundary layer theory are limited to the problem in two space dimensions, cf. \cite{AWXY, e-1, Masmoudi-Wong, OA1, Ole, xin, XZ} and the references therein.

Recently in \cite{lwy}, we obtain the local well-posedness of classical solutions to the problem \eqref{pr} under some constraint on the structure of the solution, in order to avoid the appearance of secondary flow (\cite{moore}) in boundary layers. Precisely, assuming that for the Euler flow given in \eqref{euler}, $U(t,x,y)>0$, in the class of boundary layers that the direction of tangential velocity field is invariant in the normal variable $z$, and the $x-$component of velocity $u(t,x,y,z)$ is strictly increasing in $z$, $\partial_zu>0$, in \cite{lwy} we have constructed a classical solution to the problem \eqref{pr}, and it is linearly stable with respect to any three dimensional perturbation. In the class of this special structure, the solution of \eqref{pr} takes the form:
\begin{equation}\label{str}
\left(u(t, x,y,z), k(x,y)u(t,x,y,z), w(t,x,y,z)\right),\end{equation}
where the function $k(x,y)$ satisfies the following condition (H):
\begin{enumerate}
\item[(H1)]   the function $k$ depending only on $(x,y)$  satisfies the inviscid Burgers equation  in $D$,
\begin{equation}\label{eqkk}
k_x +kk_y=0;
\end{equation}

\item[(H2)] the outer Euler flow
$$(U(t,x,y), k(x,y)U(t,x,y), 0, p(t,x,y))$$
with $U(t,x,y)>0$, satisfies that from the system \eqref{euler},
\begin{equation*}
\begin{cases}
\pd_tU+U\pd_xU+kU\pd_yU+\pd_x p=0,\\
\pd_y p-k\pd_x p=0.
\end{cases}\end{equation*}
\end{enumerate}

Moreover, the authors recently observed in \cite{lwy-1} that for the shear flow $(u^s(t,z), v^s(t,z), 0)$ of the three-dimensional Prandtl equations, the special solution structure \eqref{str} is the only stable case.

Under the above assumption \eqref{str} of special solution structure,  the original problem \eqref{pr} of three dimensional Prandtl equations is reduced to the following one for two unknown functions $(u,w)$:
\begin{equation}\label{eqk2}\begin{cases}
\partial_t u+(u\partial_x +ku\partial_y +w
\partial_z) u-\partial_z^2 u=-\pd_x p,\quad &in~Q,\\[2mm]
\partial_x u+\partial_y (ku)+\partial_z w=0,\quad &in~Q,\\[2mm]
u|_{z=0}=w|_{z=0}=0, \quad \lim\limits_{z\rightarrow +\infty}u=U(t,x,y),\\[2mm]
u|_{\pd Q_-}=u_1(t,x,y,z),\quad u|_{t=0}=u_0(x,y,z),
\end{cases}\end{equation}
where $\pd Q_-=(0,\infty)\times\gamma_-\times\mathbb{R}_+$,
with
$$\gamma_-~=~\{(x,y)\in\pd D|~ (1,k(x,y))\cdot\vec{n}(x,y)<0\},$$
and $\vec{n}(x,y)$ is the unit outward normal vector of $D$ at $(x,y)\in\pd D$.

For this reduced problem under the assumption that
\begin{equation}\label{ass_ib}
\pd_z u_0>0,~\pd_z u_1>0,~\quad {\rm for}~z\geq0,
\end{equation}
 in the class of $\partial_zu>0$, we apply the method developed by Oleinik \cite{Ole} for two dimensional Prandtl equations. Precisely, by the Crocco transformation,
\begin{equation*}
\xi=x,~\eta=y,~\zeta=\frac{u(t,x,y,z)}{U(t,x,y)},~W(t,\xi,\eta,\zeta)=\frac{\pd_z u(t,x,y,z)}{U(t,x,y)},
\end{equation*}
 the problem \eqref{eqk2} becomes the following initial boundary value problem,
\begin{equation}\label{pr_main}
\begin{cases}
L(W)~\triangleq~\partial_t W+\zeta U(\partial_\xi +k\partial_\eta)W+A\pd_\zeta W+BW-W^2\pd_\zeta^2 W=0, \quad in~\Omega_T,\\
W|_{\zeta=1}=0,\quad W\pd_\zeta W|_{\zeta=0}=\frac{ p_x}{U},\\
W|_{\Gamma_-}=W_1(t,\xi,\eta,\zeta)\triangleq \frac{\pd_z u_1}{U},\\
W|_{t=0}=W_0(\xi,\eta,\zeta)\triangleq \frac{\pd_z u_0}{U},
\end{cases}\end{equation}
where
$$\Omega_T=\{(t,\xi,\eta,\zeta)|~0< t<T,~(\xi,\eta)\in D,~0\leq\zeta<1\},$$
$$\Gamma_-=\{(t,\xi,\eta,\zeta)|~0< t<T,~(\xi,\eta)\in\gamma_-,~0\leq\zeta<1\},$$
and
\begin{equation}\label{def_not}
A=-\zeta(1-\zeta)\frac{ U_t}{ U}-(1-\zeta^2) \frac{p_x}{U},\quad
~B=\frac{ U_t}{ U}+\zeta(U_x+kU_y)-\pd_y k\cdot \zeta U.
\end{equation}
For the problem \eqref{pr_main} of the degenerate parabolic equation, we established local existence
of classical solutions in \cite{lwy}.

As a continuation of the paper \cite{lwy},
the purpose of this paper is to prove the global in time existence of a weak solution to the problem \eqref{pr_main} for data
satisfying \eqref{ass_ib} and the   favourable pressure condition:
\begin{equation}\label{press}
p_\xi (t,\xi,\eta)~\leq~0,\quad {\rm for}~t>0,~(\xi,\eta)\in D.
\end{equation}
For this, we will adopt the approach introduced by Xin-Zhang in \cite{XZ} for the the two-dimensional Prandtl equations to the three-dimensional setting. As observed in \cite{prandtl}, the main motivation of introducing
the favourate condition on pressure is to avoid the separation of boundary layers.

For completeness,  the definition of weak solutions to \eqref{pr_main} is
given as follows.

\begin{define}
 A function $W(t,\xi,\eta,\zeta)\in L^\infty\Big(0,T;BV(\Omega)\Big)$ for some $T>0$
is called a weak solution of the problem \eqref{pr_main}
in $t<T,$ if the following conditions hold:

{\bf{(i)}} There exists a positive constant $C$ such that
\[C^{-1}(1-\zeta)~\leq~W(t,\xi,\eta,\zeta)~\leq~C(1-\zeta),\quad \forall (t,\xi,\eta,\zeta)\in \Omega_T.\]

{\bf{(ii)}} $W$ satisfies the first equation and the initial boundary conditions of \eqref{pr_main} in the  weak sense:
 \begin{equation*}
 \begin{split}
&\quad\int_0^T\int_\Omega\Big\{{1\over W}\Big[\psi_t+\zeta (U\psi)_\xi+\zeta (Uk\psi)_\eta+(A\psi)_\zeta-B\psi\Big]-W\psi_{\zeta\zeta}\Big\}d\xi d\eta d\zeta dt\\
&=\int_\Omega{1\over W_0}\psi|_{t=0}d\xi d\eta d\zeta-\int_0^T\int_D{p_x\over U}\cdot{\psi\over W}|_{\zeta=0}d\xi d\eta dt
+\int_0^T\int_{\gamma_-}\int_0^1{\zeta U\psi\over W_1}\cdot k_nd\zeta dl dt,
\end{split}\end{equation*}
for any test function
\(\psi(\tau,\xi,\eta,\zeta)\in C^\infty(\overline{\Omega_T})\)
satisfying
\[\psi=0,~{\rm at}~ t=T~{\rm or}~(\xi,\eta)\in\gamma_+;\quad
\psi_\zeta=0~~{\rm at} ~\zeta=0.\]
Here,
$$\gamma_+~=~\{(x,y)\in\pd D|~ (1,k(x,y))\cdot\vec{n}(x,y)>0\},$$
and the function
$k_n=(1,k)\cdot\vec{n}$ with $\vec{n}$ being the unit outward normal vector of $\gamma_-$.
\end{define}

The main result of this paper can be stated as follows.

\begin{theorem}\label{thm_global}
For the problem \eqref{pr_main}, and any  $T>0$, assume that  $k\in C^2(D)$, $U\in C^2((0,T)\times D)$, $p_x\in C^1((0,T)\times D)$ satisfy \eqref{press}, and the initial boundary data $W_0\in C^1(\Omega),W_1\in C^3(\Gamma_-)$ satisfy
\begin{equation}\label{ass_ib_glo}
C_0^{-1}(1-\zeta)\leq W_0,W_1\leq C_0(1-\zeta),
 \end{equation}
for a positive constant $C_0$. Then, there exists a weak solution $W(t,\xi,\eta,\zeta)\in L^\infty\Big(0,T;BV(\Omega)\Big)$ to the problem \eqref{pr_main} in the sense of Definition 1.1.
\end{theorem}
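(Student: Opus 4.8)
\emph{Proof strategy.} The plan is to transplant the vanishing-viscosity and $BV$-compactness argument of Xin--Zhang \cite{XZ} from two to three space variables, the new difficulties coming from the transport direction $k\,\partial_\eta$ and from the lateral boundary of $D$. First I would regularize the degenerate parabolic problem \eqref{pr_main}: for a small parameter $\epsilon>0$, replace the degenerate diffusion coefficient $W^2$ by a uniformly positive regularization (such as $W^2+\epsilon$, possibly further modified near $\zeta=1$), mollify the data $W_0\in C^1$, $W_1\in C^3$ and the given functions $k\in C^2(D)$, $U\in C^2$, $p_\xi\in C^1$ into smooth and mutually compatible ones, and, if the compatibility at the corners forces it, slightly perturb the condition at $\zeta=1$ to $W^\epsilon|_{\zeta=1}=O(\epsilon)$. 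The regularized problem $(P_\epsilon)$ is then uniformly parabolic, so the local-in-time classical existence theory established in \cite{lwy} applies; the point is that the uniform-in-$\epsilon$ a priori bounds obtained next allow one to continue the local solution $W^\epsilon$ to the whole of $[0,T]$.

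\textbf{The pointwise bounds.} The heart of the argument is the pair of comparison estimates $C^{-1}(1-\zeta)\le W^\epsilon(t,\xi,\eta,\zeta)\le C(1-\zeta)$ on $\Omega_T$ with $C$ independent of $\epsilon$. For the upper bound one compares $W^\epsilon$ with a barrier of the form $M(1-\zeta)e^{\lambda t}$; using the explicit expressions \eqref{def_not} for $A$ and $B$ one checks this is a supersolution of the (uniformly parabolic) operator provided $\lambda,M$ are chosen large depending only on the $C^2$-norms of $k,U$, the $C^1$-norm of $p_\xi$ and $C_0$, and then the maximum principle gives the claim; the genuinely three-dimensional contributions $\zeta Uk\,\partial_\eta W^\epsilon$ and $-\partial_y k\cdot\zeta U$ inside $B$ are absorbed into $\lambda$ since $k\in C^2(D)$. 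The lower bound $W^\epsilon\ge C^{-1}(1-\zeta)$ is where the favourable pressure hypothesis \eqref{press} is used in an essential way: because $p_\xi\le0$ and $U>0$, the boundary relation $W^\epsilon\,\partial_\zeta W^\epsilon|_{\zeta=0}=p_\xi/U\le0$ (suitably regularized) forces $\partial_\zeta W^\epsilon|_{\zeta=0}\le0$, and combined with $W^\epsilon|_{\zeta=1}=O(\epsilon)$ and $W^\epsilon>0$ this allows the construction of a subsolution of the form $c(1-\zeta)$ minus a small lower-order correction, again via comparison. Equivalently, \eqref{press} prevents $W^\epsilon$ from developing an interior zero, i.e. it rules out boundary-layer separation, uniformly in $\epsilon$.

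\textbf{Derivative estimates and compactness.} Differentiating the equation of $(P_\epsilon)$ in $\zeta$ gives a linear parabolic equation for $\partial_\zeta W^\epsilon$; inserting the bounds just obtained, the sign of $p_\xi$ and the relations at $\zeta=0,1$, the maximum principle yields $|\partial_\zeta W^\epsilon|\le C$ uniformly, so $W^\epsilon$ is uniformly Lipschitz in $\zeta$. For the tangential directions I would prove a uniform bound on $(\partial_\xi W^\epsilon,\partial_\eta W^\epsilon)$ in $L^\infty\big(0,T;L^1(\Omega)\big)$: differentiate in $\xi$ and in $\eta$, multiply by a regularized sign of the derivative and integrate, keeping track of the contribution of the transport operator $\zeta U(\partial_\xi+k\partial_\eta)$ together with the inflow/outflow splitting $\partial D=\gamma_-\cup\gamma_+$, so that the resulting lateral integrals carry a good sign on $\gamma_-$, where $W^\epsilon=W_1$ with $W_1\in C^3$, and are absent on $\gamma_+$. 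Feeding these bounds back into the equation also controls $\partial_t W^\epsilon$ in $L^\infty\big(0,T;L^1(\Omega)\big)$. Hence $\{W^\epsilon\}$ is uniformly bounded in $L^\infty\big(0,T;BV(\Omega)\big)$, and by Helly's selection theorem (with the compact embedding $BV\hookrightarrow L^1$) a subsequence converges a.e. and in $L^1_{\mathrm{loc}}(\Omega_T)$ to some $W\in L^\infty\big(0,T;BV(\Omega)\big)$, its derivatives converging weakly-$*$ as measures.

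\textbf{Passage to the limit.} The pointwise bounds survive, so $W$ satisfies condition (i) of Definition 1.1 and $1/W^\epsilon\to1/W$ boundedly a.e. One then tests the equation of $(P_\epsilon)$ against $\psi/(W^\epsilon)^2$ for an admissible $\psi$, rewrites $\partial_t W^\epsilon/(W^\epsilon)^2=-\partial_t(1/W^\epsilon)$ and converts each transport and drift term similarly, integrates by parts in $t$, in $(\xi,\eta)$ over $D$ and twice in $\zeta$ over $(0,1)$ to move all derivatives onto $\psi$, and identifies the boundary contributions: the $t=0$ terms give the $1/W_0$ integral, the $\zeta=0$ terms give the $p_x/U$ integral (using $\psi_\zeta|_{\zeta=0}=0$), the $\gamma_-$ part of the lateral integral gives the $\int_{\gamma_-}\zeta U\psi/W_1\cdot k_n$ term while the $\gamma_+$ part drops out since $\psi|_{\gamma_+}=0$, and the $\zeta=1$ terms are handled using $A|_{\zeta=1}=0$ together with the Lipschitz, linear-in-$(1-\zeta)$ behaviour of $W^\epsilon$. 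Letting $\epsilon\to0$ and using dominated convergence with the $L^1_{\mathrm{loc}}$ convergence of $W^\epsilon$ and the weak-$*$ convergence of its derivatives (the artificial diffusion contributing $\epsilon\int W^\epsilon\psi_{\zeta\zeta}\to0$) yields the weak formulation in Definition 1.1(ii), completing the proof. I expect the main obstacle to be the uniform lower bound $W^\epsilon\ge C^{-1}(1-\zeta)$, i.e. the choice of subsolution that exploits \eqref{press} against the coefficients $A,B$ of \eqref{def_not}, together with the tangential $BV$ estimate, which must be made compatible with the genuinely three-dimensional transport term $k\,\partial_\eta$ and with the characteristic-type lateral boundary $\gamma_\pm$.
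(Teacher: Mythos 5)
Your proposal takes a genuinely different route from the paper. You propose a \emph{direct} vanishing-viscosity regularization of the full degenerate problem \eqref{pr_main}: replace $W^2$ by $W^2+\epsilon$, solve the resulting uniformly parabolic problem globally in $[0,T]$, establish uniform pointwise and $BV$ bounds, and pass to the limit. The paper instead follows the \emph{operator-splitting} ("viscous splitting") scheme that is actually what Xin--Zhang use: the time interval is divided into $n$ slices, on even slices one solves only the porous-medium part $\tfrac12\partial_t W - W^2\partial_\zeta^2 W + A\partial_\zeta W + bW=0$ (viewed as a one-dimensional problem in $\zeta$ with $(\xi,\eta)$ as parameters, with the $\epsilon$-regularization nested inside this step), and on odd slices one solves only the transport part $\tfrac12\partial_t W + \zeta U(\partial_\xi+k\partial_\eta)W + (B-b)W=0$ by characteristics. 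The auxiliary coefficient $b$ in \eqref{def_b} is chosen precisely so that the prescribed boundary data $W_1$ on $\Gamma_-$ is an exact solution of the porous-medium step there, making the splitting compatible with the lateral boundary condition. What each approach buys: yours avoids the bookkeeping of the splitting and the extra limit $n\to\infty$, and is conceptually closer to a single monolithic compactness argument; the paper's splitting decouples the two mechanisms so that each can be analyzed with elementary one-dimensional maximum-principle and characteristics arguments, and in particular the lateral boundary is handled entirely within the transport step.

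There are, however, two concrete gaps in your proposal that the splitting is specifically designed to avoid. First, you assert that the local existence theory of \cite{lwy} plus a priori bounds gives a global-in-$[0,T]$ solution of the $\epsilon$-regularized problem, but the regularized problem is still only parabolic in $\zeta$ and purely hyperbolic in $(\xi,\eta)$, with an inflow boundary condition $W^\epsilon=W_1$ on $\Gamma_-$ that must be made compatible with the $\zeta$-parabolicity; continuation to time $T$ also needs uniform higher-order bounds that you do not outline. Second, and more importantly, the tangential $BV$ estimate cannot be carried out merely by splitting $\partial D=\gamma_-\cup\gamma_+$ and "keeping track of signs": the essential structural input is $k_\xi+kk_\eta=0$ (condition (H1)), which the paper uses in Proposition \ref{prop_tr} to show that $k$ is constant along the projected characteristics, that these projections are straight lines, and hence that every backward characteristic either reaches $t=t_i$ or exits through $\overline{\gamma_-}$; without this, the lateral boundary terms in the $L^1$ estimate for $W_\xi,W_\eta$ (see \eqref{est_tr-x1}, \eqref{def_bd1}) are not controlled. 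Your proposal never invokes the Burgers structure of $k$, so as written the treatment of the lateral boundary in the $BV$ estimate has a hole. If you repair these two points, the direct-viscosity route should also close, but the splitting in the paper is not merely a stylistic choice: it isolates exactly where the maximum principle, the favorable pressure sign, and the Burgers property of $k$ enter.
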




\section{The proof of the main result}

Following the approach introduced in \cite{XZ},  a viscous splitting method is used to construct a sequence of approximate solutions to the problem
\eqref{pr_main}.
 Precisely, divide the time interval $[0,T]$ into $n$ equal sub-intervals:
\[0=t_0<t_1<\cdots<t_{n-1}<t_n=T,\quad t_{i+1}-t_i={T\over n},~~{\rm for}~0\leq i\leq n-1.\]
First, in the time step $[0,t_1]$ or $[t_i, t_{i+1}]$ for an even $i$, we construct the approximate solution
 by solving the following initial boundary value problem for a porous media-type equation:
\begin{equation}\label{pr_porous}\begin{cases}
{1\over2}\pd_t W-W^2\pd_\zeta^2 W+A\pd_\zeta W+b W=0,\quad in~(0,t_1]\times\Omega~{\rm or}~(t_i,t_{i+1}]\times\Omega,\\
W|_{t=0}=W_0,~{\rm or}~W|_{t=t_i}=W(t_i,\xi,\eta,\zeta)~({\rm given~in~the~previous~step}),\\
 W|_{\zeta=1}=0,\quad WW_\zeta|_{\zeta=0}={p_x\over U};
\end{cases}\end{equation}
and in the time step $[t_i,t_{i+1}]$ for an odd $i$, we construct the approximate solution by solving the following  problem for a transport equation:
\begin{equation}\label{pr_trans}
\begin{cases}
{1\over2}\pd_t W+\zeta U(\pd_\xi+k\pd_\eta)W+(B-b) W=0, \quad in~(t_i,t_{i+1}]\times\Omega,\\
W|_{t=t_i}=W(t_i,\xi,\eta,\zeta)~({\rm given~in~the~previous~step}),\\
W|_{\Gamma_-}=W_1.
\end{cases}\end{equation}
Here, the coefficient function $b$ in \eqref{pr_porous} will be chosen later to satisfy the boundary condition $W|_{\Gamma_-}=W_1$ for the solution of \eqref{pr_porous}.
What is needed  is to prove that the function $W$ constructed in the  time interval $[0,T]$ is uniformly bounded in $n$ and
has a uniform total variation with respect to the spatial variables $\xi,\eta$ and $\zeta$. This implies that  as $n\rightarrow\infty$, the limit function of the approximate solutions $W$ constructed in \eqref{pr_porous}-\eqref{pr_trans} is a weak solution to the problem \eqref{pr_main}.
The proof is divided into the following subsections.

\subsection{Porous medium-type equation}

In this subsection, consider the following problem for a
porous medium-type equation
\begin{equation}\label{pr_por}
\begin{cases}
\pd_t W-W^2\pd_\zeta^2 W+A\pd_\zeta W+bW=0,\quad {\rm for} ~0<t<T,~(\xi,\eta,\zeta)\in\Omega,\\
W|_{t=0}=W_0(\xi,\eta,\zeta),\\
W\pd_\zeta W|_{\zeta=0}={p_x\over U}\leq0,\quad W|_{\zeta=1}=0.
\end{cases}\end{equation}

In order to match the boundary condition $W|_{\Gamma_-}=W_1$,  by observing that $W_1>0$ for $0\leq\zeta<1$ and
\[ W_1=\mathcal{O}(1-\zeta),\quad as~\zeta\rightarrow1,\]
we set
\begin{equation}\label{def_b}
b(t,\xi,\eta,\zeta)=-{f(\xi,\eta)\over W_1}\Big(\pd_t W_1-(W_1)^2\pd_\zeta^2 W_1+A|_{\Gamma_-}\cdot\pd_\zeta W_1\Big),
\end{equation}
where $f(\xi,\eta)$ is a non-negative smooth function defined on the closure of the domain $D$ satisfying  $f(\xi,\eta)|_{\gamma_-}=1$.
By the formulations in \eqref{def_not}:
\[A=-(1-\zeta)\Big[\zeta{U_t\over U}+(1+\zeta){p_x\over U}\Big],\]
and the assumption given in Theorem \ref{thm_global},  there exists a positive constant $M_0$ depending on the parameters of \eqref{pr_main}, such that for the function $b(t,\xi,\eta,\zeta)$ given in \eqref{def_b},
\begin{equation}\label{est_b}
\|b(t,\xi,\eta,\zeta)\|_{W^{1,\infty}(\Omega_T)}\leq M_0.
\end{equation}
The problem \eqref{pr_por} can be viewed as a one space dimensional problem by regarding variables $\xi$ and $\eta$ as parameters.

Note that the equation in \eqref{pr_por} is degenerate on the boundary $\{\zeta=1\}$. As \cite{XZ}, consider  the following uniformly approximated
 parabolic problem:
\begin{equation}\label{pr_por1}
\begin{cases}
\pd_t W_\epsilon-(W_\epsilon^2+\epsilon)\pd_\zeta^2 W_\epsilon+A\pd_\zeta W_\epsilon+bW_\epsilon=0,\quad for ~t>0,~(\xi,\eta,\zeta)\in\Omega,\\
W_\epsilon|_{t=0}=W_0>0,\\
W_\epsilon\pd_\zeta W_\epsilon|_{\zeta=0}={p_x\over U}\leq0,\quad W_\epsilon|_{\zeta=1}=0,
\end{cases}\end{equation}
for a positive constant $\epsilon>0$. It is  known that problem \eqref{pr_por1} has a unique smooth solution. After getting
some uniform bounds of $W_\epsilon$ with respect to $\epsilon$, we
can obtain a solution to the problem \eqref{pr_por} by taking $\epsilon\rightarrow0$
in \eqref{pr_por1}. In fact, we have

\begin{theorem}\label{thm_por}
Under the assumption of Theorem \ref{thm_global}, the problem \eqref{pr_por} has a unique solution $W\in BV(0,T;\Omega)$ for any fixed $T>0$. Moreover, $W$ has the following properties:
\\
(1) there exists a positive constant $\beta$, depending on $\|W_0\|_{L^\infty}$ and the $L^\infty$-norm of the parameters of \eqref{pr_por}, such that
      \begin{equation}\label{est_por1}
      \theta_0e^{-\beta t}\varphi\leq W(t,\xi,\eta,\zeta)\leq C_1e^{M_1t}(1-\zeta),
      \end{equation}
where
\begin{equation}\label{def_para}
\begin{split}
&\varphi=e^{{\pi\over2}\zeta}\sin{\pi\over2}(1-\zeta),\quad \theta_0=\min\{W_0/\varphi\},\\
&C_1=\max\Big\{\|{W_0\over1-\zeta}\|_{L^\infty},~\sqrt{\|{p_x\over U}\|_{L^\infty}}\Big\},
\quad
M_1=\|{A\over1-\zeta}-b\|_{L^\infty};
\end{split}\end{equation}

 (2) there exists a positive constant $C_2$, depending on $\|W\|_{L^\infty},\|\pd_\zeta W_{0}\|_{L^\infty}$ and the $C^1$-norm of the parameters of \eqref{pr_por}, such that
    \begin{equation}\label{est_porz}
    |W_\zeta|\leq C_2;\end{equation}

(3)  for any $t>0$,
 \begin{equation}\label{est_w-z}
\int_0^1|W_\zeta(t,\xi,\eta,\zeta)|d\zeta\leq\int_0^1| W_{0,\zeta}(\xi,\eta,\zeta)|d\zeta
+W(t,\xi,\eta,0)-W_0(\xi,\eta,0),
\end{equation}
and
\begin{equation}\label{est_w-x}
\begin{split}
\int_0^1{|W_\xi(t)|\over W^2(t)}(1-\zeta)^2d\zeta\leq &\int_0^1{|W_{0,\xi}|\over W_0^2}(1-\zeta)^2d\zeta+C_3t\Big(1+\int_0^1|W_{0,\zeta}|d\zeta\Big)\\
&+C_3\int_0^t\int_0^1
{|W_\xi(s)|\over W^2(s)}(1-\zeta)^2d\zeta ds.
\end{split}\end{equation}
Here, the positive constant $C_3$ depends on $\|W\|_{L^\infty}$ and the $C^1$-norm of the parameters of \eqref{pr_por}. Also, $W_\eta$ and $W_t$ satisfy similar estimates as \eqref{est_w-x} by simply replacing the partial derivative
with respect to $\xi$ in \eqref{est_w-x} by
the partial derivatives with respect to $\eta$ and $t$, respectively.
\end{theorem}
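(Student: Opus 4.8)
The plan is to carry out the vanishing-viscosity scheme sketched after \eqref{pr_por1}: for each fixed $\epsilon>0$ solve the uniformly parabolic problem \eqref{pr_por1}, derive the bounds (1)--(3) \emph{uniformly in $\epsilon$}, and then let $\epsilon\to0$. Since $\xi$ and $\eta$ enter only as parameters, everything is effectively one-dimensional in $\zeta$. For fixed $\epsilon$, local existence of a smooth solution $W_\epsilon$ is classical quasilinear parabolic theory: the leading coefficient $W_\epsilon^2+\epsilon\ge\epsilon$ is uniformly elliptic, and the boundary relation at $\zeta=0$, namely $\partial_\zeta W_\epsilon=(p_x/U)/W_\epsilon$, has a bounded coefficient once positivity of $W_\epsilon$ is known; global existence on $[0,T]$ then follows by a continuation argument from the a priori bounds below.

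For the upper bound in \eqref{est_por1} I would use the barrier $\overline W=C_1e^{M_1t}(1-\zeta)$: since $\partial_\zeta^2\overline W=0$, the choice of $M_1$ in \eqref{def_para} makes $\overline W$ a supersolution, $\overline W|_{t=0}\ge W_0$ and $\overline W|_{\zeta=1}=0$ by construction, and $\overline W\,\partial_\zeta\overline W|_{\zeta=0}=-C_1^2e^{2M_1t}\le-\|p_x/U\|_{L^\infty}\le p_x/U$; a maximum-principle comparison for $W_\epsilon-\overline W$ (the quadratic term linearizes because $\partial_\zeta^2\overline W=0$), noting that a positive maximum on $\{\zeta=0\}$ would force $\partial_\zeta W_\epsilon\le\partial_\zeta\overline W<0$ and hence $W_\epsilon\partial_\zeta W_\epsilon<p_x/U$ there, gives $W_\epsilon\le\overline W$. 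The lower bound is more delicate. The function $\varphi$ of \eqref{def_para} satisfies $\varphi(1)=0$, $\varphi>0$ on $[0,1)$, $\varphi(0)=1$, $\varphi'(0)=\tfrac\pi2>0$, $\varphi\gtrsim(1-\zeta)$, and $\varphi''\le0$; being concave, $\varphi$ fails to make $\underline W=\theta_0e^{-\beta t}\varphi$ an exact subsolution of the $\epsilon$-regularized equation near $\zeta=1$, but using $W_\epsilon\le M:=C_1e^{M_1T}$ and the factor $1-\zeta$ in $A$, one finds that for $\beta$ large depending on $M$, $\|W_0\|_{L^\infty}$ and the coefficients, the residual of $\underline W$ is $O(\epsilon)$. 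A comparison argument applied to $W_\epsilon-\underline W+C\epsilon(e^{Kt}-1)$ then works: this quantity is $\ge0$ at $t=0$ and $\zeta=1$, and at a negative minimum on $\{\zeta=0\}$ one would have $\partial_\zeta W_\epsilon\ge\partial_\zeta\underline W=\theta_0e^{-\beta t}\tfrac\pi2>0$ together with $W_\epsilon>0$, contradicting $W_\epsilon\partial_\zeta W_\epsilon=p_x/U\le0$ --- this is precisely where the favourable pressure \eqref{press} is used. Letting $\epsilon\to0$ yields $W\ge\theta_0e^{-\beta t}\varphi$.

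For \eqref{est_porz}, put $q=\partial_\zeta W_\epsilon$; differentiating the equation gives
\[
\partial_tq-(W_\epsilon^2+\epsilon)\partial_\zeta^2q+(A-2W_\epsilon q)\partial_\zeta q+(A_\zeta+b)q=-b_\zeta W_\epsilon,
\]
a parabolic equation with bounded coefficients and bounded right-hand side whose boundary data are controlled: $q|_{t=0}=\partial_\zeta W_0\in L^\infty$; $q|_{\zeta=1}\in[-M,0]$ from $0\le W_\epsilon\le M(1-\zeta)$; and $q|_{\zeta=0}=(p_x/U)/W_\epsilon|_{\zeta=0}$ is bounded since the lower bound forces $W_\epsilon|_{\zeta=0}\ge\tfrac12\theta_0e^{-\beta T}>0$ (for $\epsilon$ small). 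The maximum principle then gives $|W_\zeta|\le C_2$. For \eqref{est_w-z} one integrates the $q$-equation in $\zeta$ against $\mathrm{sgn}\,q$: the interior contributions are favourably signed and the boundary terms produce exactly $W(t,\xi,\eta,0)-W_0(\xi,\eta,0)$ (again using $p_x\le0$). For the tangential and time derivatives the equation for $W_\xi$ contains the term $2W_\epsilon W_\xi\,\partial_\zeta^2W_\epsilon$, which is not $L^\infty$-controllable, so a plain $L^1_\zeta$ bound cannot close; instead one works with the functional $\int_0^1\frac{|W_\xi|}{W^2}(1-\zeta)^2\,d\zeta$ --- comparable to the total variation in $\xi$ because $W\simeq1-\zeta$ --- whose time derivative, after integrating by parts in $\zeta$, is estimated by itself plus $C\int_0^1|W_\zeta|\,d\zeta$ plus a constant, so \eqref{est_w-x} follows by Gronwall's inequality; $W_\eta$ and $W_t$ are handled identically, the bound on $W_t$ completing the $BV$ regularity in all variables.

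Finally, the uniform $L^\infty$, $\zeta$-Lipschitz and $BV$ bounds provide, via Helly's selection theorem, a subsequence $W_{\epsilon_j}\to W$ a.e.\ and in $L^1_{\mathrm{loc}}$; $W$ inherits \eqref{est_por1}--\eqref{est_w-x}, and passing to the limit in the weak formulation of \eqref{pr_por1} --- rewritten with the weight $1/W_\epsilon$ as in Definition 1.1 so that no products of derivatives of $W_\epsilon$ occur, the $\epsilon$-term becoming $\epsilon\int W_\epsilon\partial_\zeta^2\psi\to0$ and the lower bound $W\gtrsim1-\zeta$ keeping all $1/W$-weighted terms integrable --- shows $W$ solves \eqref{pr_por}; uniqueness follows from a comparison argument resting on $W\simeq1-\zeta$. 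I expect the main obstacle to be the weighted estimate \eqref{est_w-x}: identifying the weight $(1-\zeta)^2/W^2$ that both tames the uncontrollable term $2W_\epsilon W_\xi\partial_\zeta^2W_\epsilon$ and keeps the functional comparable to a genuine total variation, and then closing the Gronwall loop, is the technical heart of the argument, with the degenerate-boundary treatment of the lower bound a close second.
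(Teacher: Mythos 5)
Your proposal follows the same overall strategy as the paper: regularize with \eqref{pr_por1}, derive $\epsilon$-uniform $L^\infty$, $\zeta$-Lipschitz and weighted $BV$ estimates by maximum-principle and barrier arguments, then pass to the limit. The paper's own proof is terse (deferring the lower bound in \eqref{est_por1}, all of part (3), and uniqueness to Lemmas 4.2, 4.6, 4.7 and Theorem 4.1 of \cite{XZ}), so the comparison is mainly with what the paper proves explicitly, namely the positivity of $W_\epsilon|_{\zeta=0}$ and the gradient bound \eqref{est_porz}.

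Two places where your route differs from the paper's. First, for part (2) you rule out a boundary extremum at $\zeta=1$ by invoking the Dirichlet information $q|_{\zeta=1}\in[-M,0]$ extracted from the two-sided bound $0\le W_\epsilon\le M(1-\zeta)$. The paper instead observes that the equation itself forces $\partial_\zeta^2 W_\epsilon|_{\zeta=1}=0$ (because $W_\epsilon|_{\zeta=1}=A|_{\zeta=1}=0$), which is a Neumann condition $\partial_\zeta q|_{\zeta=1}=0$, and then shifts $V=q-\alpha\zeta$ so that $\partial_\zeta V_1|_{\zeta=1}=-\alpha e^{-\beta t}$ has a strict sign, letting $\alpha\to0$ at the end. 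Both work; the paper's version avoids using the Lipschitz consequence of \eqref{est_por1} at $\zeta=1$ and is the cleaner way to exclude that corner. Second, for the positivity $W_\epsilon|_{\zeta=0}>0$ the paper argues directly: if $W_{\epsilon_0}$ vanished at a boundary point, $W_{\epsilon_0}^2$ would attain its minimum there, forcing $\partial_\zeta W_{\epsilon_0}^2\ge0$, which contradicts the boundary relation $\partial_\zeta W_{\epsilon_0}^2=2p_x/U<0$ (after shifting $p_x/U\mapsto p_x/U-\delta$). In your lower-bound comparison you write "together with $W_\epsilon>0$" at a putative negative minimum of $W_\epsilon-\underline W$ on $\{\zeta=0\}$, but that positivity is exactly what has to be established, not assumed; you should run the paper's minimum-of-$W_\epsilon^2$ argument first to secure $W_\epsilon|_{\zeta=0}>0$ before the barrier comparison, since at a negative minimum of $W_\epsilon-\underline W$ you only know $W_\epsilon<\theta_0 e^{-\beta t}\varphi(0)$, not that $W_\epsilon>0$.

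Minor gaps that are fixable but worth flagging: in part (2) you call the $q$-equation "a parabolic equation with bounded coefficients," but the coefficient $-2W_\epsilon q$ of $\partial_\zeta q$ contains $q$ itself; the maximum principle still applies because $\partial_\zeta q=0$ at an interior extremum, and you also need the usual $e^{-\beta t}$ weight with $\beta\ge\|A_\zeta+b\|_{L^\infty}+1$ to absorb the zeroth-order term $(A_\zeta+b)q$ — this is what the paper's $V_1=e^{-\beta t}V$ accomplishes. For the $\epsilon$-correction in the lower bound, the issue is only the term $-\epsilon\partial_\zeta^2\underline W=-\epsilon\theta_0 e^{-\beta t}\varphi''\ge0$, which is indeed $O(\epsilon)$ uniformly in $\zeta$, so your correction $C\epsilon(e^{Kt}-1)$ does the job; the $-\underline W^2\partial_\zeta^2\underline W$ term, while of the "wrong sign," is $O(\underline W)$ because the $(1-\zeta)^2$ factor in $\underline W^2$ beats the $1/(1-\zeta)$ growth of $\varphi''/\varphi$, so a large $\beta$ suffices without any $\epsilon$-help there. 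With these adjustments your argument is a correct reconstruction of the referenced parts of \cite{XZ}.
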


\begin{proof}[\bf{Proof.}]
To prove the inequality \eqref{est_por1},
we first show $W|_{\zeta=0}\geq0$ by using the favorable
condition on the pressure function.  For this, we can assume that ${p_x\over U}<0$ because we can replace it by ${p_x\over U}-\delta$ for some constant $\delta>0$ and then let $\delta\rightarrow0$.
Then, we want to show  that
\begin{equation}\label{est_bd}
W_\epsilon|_{\zeta=0}>0,\quad \forall \epsilon>0
\end{equation}
holds for the problem \eqref{pr_por1}. Otherwise, by using the continuity of $W_\epsilon$ and $W_0|_{\zeta=0}>0$, there exist  $\epsilon_0>0$ and a point $P$ on $\{\zeta=0\}$, such that $W_{\epsilon_0}|_{P}=0$. That is,  $W_{\epsilon_0}^2$ attains its minimum at $P$, which implies that
$\pd_\zeta W_{\epsilon_0}^2|_P~\geq~0$. But from the boundary condition of \eqref{pr_por1} on $\zeta=0$, we have
\[\pd_\zeta W_{\epsilon_0}^2|_{\zeta=0}=2W_{\epsilon_0}\pd_\zeta W_{\epsilon_0}|_{\zeta=0}=2{p_x\over U}<0,\]
which is a contradiction. Hence, we obtain \eqref{est_bd}, which implies that $W|_{\zeta=0}~\geq~0$ for the problem \eqref{pr_por} by letting $\epsilon\rightarrow0$.

Now, combining $W|_{\zeta=0}~\geq~0$ with the boundary condition $WW_\zeta|_{\zeta=0}={p_x\over U}\leq0$, we obtain $W_\zeta|_{\zeta=0}\leq0$. The rest of proof for \eqref{est_por1} is similar to that of Lemma 4.2 in \cite{XZ} so that we
omit the details. Moreover, there is a constant $m>0$, such that
\begin{equation}\label{est_porz0}
W|_{\zeta=0}\geq m,\quad W_\epsilon|_{\zeta=0}\geq m,\quad \forall \epsilon>0
\end{equation}
hold.

(2) We now turn to the estimate \eqref{est_porz}. Consider the
 problem \eqref{pr_por1} for $W_\epsilon$, and
the corresponding problem for $\pd_\zeta W_\epsilon$ as follows,
\begin{equation}\label{pr_por_z}
\begin{cases}L^\epsilon(\pd_\zeta W_\epsilon)-2W_\epsilon \pd_\zeta W_\epsilon \pd_{\zeta}^2W_\epsilon+(A_\zeta+b)\pd_\zeta W_\epsilon=-b_\zeta W_\epsilon,\\
\pd_\zeta W_\epsilon|_{t=0}=W_{0,\zeta},\\
W_\epsilon\pd_\zeta W_\epsilon|_{\zeta=0}={p_x\over U},
\quad \pd_\zeta^2 W_{\epsilon}|_{\zeta=1}=0,
\end{cases}\end{equation}
where the operator
\[L^\epsilon=\pd_t-(W_\epsilon^2+\epsilon)\cdot\pd_\zeta^2+A\cdot\pd_\zeta.\]
Here, note that
$\pd_{\zeta}^2W_\epsilon|_{\zeta=1}=0$ because $W_\epsilon|_{\zeta=1}=0$ and $A|_{\zeta=1}=0$ in the first equation in \eqref{pr_por1}.

Set $V=\pd_\zeta W_\epsilon-\alpha \zeta$ with $\alpha$ being a constant to be chosen later. It follows that
\begin{equation}\label{V-es}
\begin{split}
&L^\epsilon(V)-2W_\epsilon(V+\alpha\zeta)V_\zeta+(A_\zeta+b-2\alpha W_\epsilon)V\\
&=(2\alpha^2\zeta-b_\zeta)W_\epsilon-\alpha A-\alpha\zeta(A_\zeta+b)
\triangleq Y.\end{split}\end{equation}
From the first step,
 $Y$ is bounded provided that $\alpha$ is bounded.
Let $V_1=e^{-\beta t}V$ with $\beta>0$ being sufficiently large  satisfying
\[\beta+A_\zeta+b-2\alpha W_\epsilon\geq1, ~or~\beta\ge \|A_\zeta+b-2\alpha W_\epsilon\|_{L^\infty}+1 .\]
From \eqref{V-es} and \eqref{pr_por_z}, we have
\begin{equation}\label{eq_w-z}
L^\epsilon(V_1)-2W_\epsilon(V+\alpha\zeta)(V_1)_\zeta+(\beta+A_\zeta+b-2\alpha W_\epsilon)V_1=e^{-\beta t}Y,
\end{equation}
with the following initial and boundary conditions
\begin{equation}\label{ib_w-z}
V_1|_{t=0}=W_{0,\zeta}-\alpha\zeta,\quad W_\epsilon V_1|_{\zeta=0}=e^{-\beta t}{p_x\over U},
\quad \pd_\zeta V_1|_{\zeta=1}=-\alpha e^{-\beta t}.
\end{equation}

Firstly, note that for an arbitrarily fixed constant $\alpha>0$, from \eqref{ib_w-z}, we have
$\pd_\zeta V_1|_{\zeta=1}<0$. Also, from \eqref{est_porz0} and the relation in \eqref{ib_w-z}
 \[W_\epsilon V_1|_{\zeta=0}=e^{-\beta t}{p_x\over U}\leq0,
 \]
it implies that $V_1$ does not attain its positive maximum on $\zeta=1$ or $\zeta=0$.
Then, if $V_1$ attains its positive maximum in the interior of $\Omega_T$ or
when $t=T$, we have from \eqref{eq_w-z} that
\[V_1\leq\max\{e^{-\beta t}Y\}\leq \|Y\|_{L^\infty}.\]
If $V_1$ achieves its positive maximum when $t=0$, it follows that
\[V_1\leq\max\Big\{W_{0,\zeta}-\alpha\zeta
\Big\}\leq 
\|W_{0,\zeta}\|_{L^\infty}.
\]
Therefore, we conclude that
\(V_1\leq
\max\Big\{ \|Y\|_{L^\infty}, \|W_{0,\zeta}\|_{L^\infty}
\Big\},\)
which implies that
\begin{equation}\label{est_w-z3}
\pd_\zeta W_\epsilon\leq \alpha+ e^{\beta t}\max\Big\{\|Y\|_{L^\infty},\|W_{0,\zeta}\|_{L^\infty}
\Big\}.\end{equation}

Secondly, for an arbitrarily fixed  constant $\alpha<0$,
by considering the possible negative minimal points of $V_1$ on $\overline{\Omega}_T$, similar to the above argument, we have
\[V_1\geq -\max\Big\{\|Y\|_{L^\infty}, \|W_{0,\zeta}\|_{L^\infty},\|{p_x\over U}{1\over W_\epsilon}\Big|_{\zeta=0}\|_{L^\infty}\Big\}\]
which implies that
\begin{equation}\label{est_w-z4}
\pd_\zeta W_\epsilon\geq\alpha -e^{\beta t}\max\Big\{\|Y\|_{L^\infty},\|W_{0,\zeta}\|_{L^\infty},\|{p_x\over U}{1\over W_\epsilon}\Big|_{\zeta=0}\|_{L^\infty}\Big\}.
\end{equation}

Hence, combining \eqref{est_w-z3} with \eqref{est_w-z4} and letting $\alpha\rightarrow0$  yield that
\[|\pd_\zeta W_\epsilon|\leq e^{\beta t}\max\Big\{\|Y\|_{L^\infty},\|W_{0,\zeta}\|_{L^\infty},\|{p_x\over U}{1\over W_\epsilon}\Big|_{\zeta=0}\|_{L^\infty}\Big\},\]
where
$\beta\geq\|A_\zeta+b\|_{L^\infty}+1$ and $Y=-b_\zeta W_\epsilon.$
Thus, we obtain  \eqref{est_porz} as $\epsilon\rightarrow0$.

(3) The proofs of \eqref{est_w-z} and \eqref{est_w-x} are similar to those given
in Lemmas 4.6 and 4.7 of \cite{XZ}, respectively.
And the proof for the uniqueness of solution to the problem \eqref{pr_por} is similar to that of Theorem 4.1 in \cite{XZ}.
Thus, we omit the detail for brevity and this
completes the proof of the theorem.
\end{proof}

\subsection{Transport equation}

In this section, we will study the problem of transport equation \eqref{pr_trans} for $t\in (t_{i}, t_{i+1}]$ with $i$ being odd, that is, we consider the problem,
\begin{equation}\label{pr_tran}
\begin{cases}
{1\over2}\pd_t W+\zeta U(\pd_\xi+k\pd_\eta)W+b_1 W=0, \quad in~(t_i,t_{i+1}]\times\Omega,\\
W|_{t=t_i}=W(t_i,\xi,\eta,\zeta),\\
W|_{\Gamma_-}=W_1,
\end{cases}\end{equation}
where the function
\begin{equation}\label{def_b1}
b_1(t,\xi,\eta,\zeta)=(B-b)(t,\xi,\eta,\zeta)
\end{equation}
with functions $B$ and $b$ being given in \eqref{def_not} and \eqref{def_b}, respectively.

For any fixed $(t,\xi,\eta,\zeta)\in(t_{i},t_{i+1}]\times\Omega$,
the characteristics of the equation \eqref{pr_tran} passing through this point is denoted by:
\begin{equation*}
\Big(s,~\gamma_1(s;t,\xi,\eta,\zeta),~\gamma_2(s;t,\xi,\eta,\zeta),~\zeta\Big), \qquad s\in(t_{i},t_{i+1}]
\end{equation*}
with $\gamma_1$ and $\gamma_2$ being determined by
\begin{equation}\label{char}\begin{cases}
\gamma_1'(s;t,\xi,\eta,\zeta)~=~2\zeta\cdot U\Big(s,\gamma_1(s;t,\xi,\eta,\zeta),\gamma_2(s;t,\xi,\eta,\zeta)\Big),\\
\gamma_2'(s;t,\xi,\eta,\zeta)~=~2\zeta\cdot (kU)\Big(s,\gamma_1(s;t,\xi,\eta,\zeta),\gamma_2(s;t,\xi,\eta,\zeta)\Big),\\
(\gamma_1,\gamma_2)(t;t,\xi,\eta,\zeta)=(\xi,\eta).
\end{cases}\end{equation}
For simplicity of notations, in the following we will also use abbreviations $\gamma_1(s)$ and $\gamma_2(s)$ to represent $\gamma_1(s;t,\xi,\eta,\zeta)$ and $\gamma_2(s;t,\xi,\eta,\zeta)$ respectively, when without confusion.

Combining \eqref{char} with the property $k_\xi+k k_\eta=0$, we have
\[{d\over ds}k\Big(\gamma_1(s),\gamma_2(s)\Big)=0,\]
which implies that
\begin{equation}\label{char_k}
k\Big(\gamma_1(s),\gamma_2(s)\Big)
\equiv k(\xi,\eta), \quad \forall s\in(t_{i},t_{i+1}].
\end{equation}
Then, from \eqref{char} and \eqref{char_k} it follows
\begin{equation}\label{exp_char}
\gamma_2(s)=k(\xi,\eta)\cdot
\gamma_1(s)+\eta-k(\xi,\eta)\xi,\end{equation}
and
$\gamma_1(s)$ is given by
\begin{equation}\label{char1}
\begin{cases}
\gamma_1'(s)=2\zeta \cdot U\Big(s,\gamma_1(s),k(\xi,\eta)\cdot
\gamma_1(s)+\eta-k(\xi,\eta)\xi\Big),\\
\gamma_1(t)=\xi.
\end{cases}\end{equation}
Observe that $\gamma_1'(s)\geq0$, and the projection of this characteristic on the $(\xi, \eta)$-plane is a straight line passing through $(\xi,\eta)$ with slope $k(\xi,\eta)$. Moreover, the function $k(\xi,\eta)$ remains constant along this line.

Note that the solution of the above problem \eqref{char1} exists and is unique when the function
 $U(t, \xi, \eta)$ is Lipschitz in $(\xi, \eta)$.
The solution $W$ of the problem \eqref{pr_tran} is represented by
\begin{equation}\label{exp_tr}\begin{split}
W(t,\xi,\eta,\zeta)~=~&W\Big(s,\gamma_1(s),
\gamma_2(s),\zeta\Big)\exp\Big\{-\int_s^t
b_1\Big(\tilde s,\gamma_1(\tilde s),
\gamma_2(\tilde s),\zeta\Big)d\tilde s\Big\}.
\end{split}\end{equation}
Denote by
\begin{equation}\label{def_t}
t^*(t,\xi,\eta,\zeta)=\inf\Big\{\tilde t\in[t_{i},t]: \forall s\in(\tilde t,t], ~\Big(\gamma_1(s;t,\xi,\eta,\zeta),\gamma_2(s;t,\xi,\eta,\zeta)\Big)\in D\Big\}.
\end{equation}
Note that if $t^*>t_{i}$, then \[\Big(\gamma_1(t^*;t,\xi,\eta,\zeta),\gamma_2(t^*;t,\xi,\eta,\zeta)\Big)\in \pd D.\]

Denote by
\[\begin{split}Q_1=\Big\{&(t,\xi,\eta,\zeta)\in(t_{i},t_{i+1}]\times\Omega:
t^*(t,\xi,\eta,\zeta)=t_{i}\Big\},\end{split}\]
and
\[\begin{split}Q_2=\Big\{&(t,\xi,\eta,\zeta)\in(t_{i},t_{i+1}]\times\Omega:
~t^*(t,\xi,\eta,\zeta)>t_{i},~\Big(\gamma_1(t^*),\gamma_2(t^*)\Big)
\in \overline \gamma_-\Big\},\end{split}\]
where $\overline \gamma_-$ is the closure of $\gamma_-=\{(\xi,\eta)\in\pd D:~\left(1,k(\xi,\eta)\right)\cdot\vec{n}(\xi,\eta)<0\}
$ on the boundary $\pd D$.

To study the estimate of the solution to problem \eqref{pr_tran}, we first give the following proposition for the representation of the solution.

\begin{prop}\label{prop_tr}
For the problem \eqref{pr_tran}, we have
\[(t_{i},t_{i+1}]\times\Omega=Q_1\cup Q_2,\]
and at any $(t, \xi, \eta, \zeta)\in (t_{i},t_{i+1}]\times\Omega$, the solution of \eqref{pr_tran} can be represented by
\begin{equation}\label{def_tr}\begin{split}
&W(t,\xi,\eta,\zeta)\\
&=\begin{cases}W\Big(t_{i-1},\gamma_1(t_{i-1}),
\gamma_2(t_{i-1}),\zeta\Big)\cdot\exp\Big\{-\int_{t_{i-1}}^t
b_1\big(s,\gamma_1(s),\gamma_2(s),\zeta\big)ds\Big\},\quad in~Q_1,\\
W_1\Big(t^*,\gamma_1(t^*),
\gamma_2(t^*),\zeta\Big)\cdot\exp\Big\{-\int_{t^*}^t
b_1\big(s,\gamma_1(s),\gamma_2(s),\zeta\big)ds\Big\},\qquad\qquad in~Q_2.\\
\end{cases}\end{split}\end{equation}
\end{prop}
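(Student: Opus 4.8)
The plan is to analyse the backward characteristics of the transport equation \eqref{pr_tran} and to integrate this linear first-order equation along them; the representation \eqref{def_tr} then results from a case distinction according to where the backward characteristic first leaves $D$. First I would fix $(t,\xi,\eta,\zeta)\in(t_i,t_{i+1}]\times\Omega$ and observe that, since $U\in C^2$ is Lipschitz in $(\xi,\eta)$, the characteristic ODE \eqref{char1} (with $\gamma_2$ then given by \eqref{exp_char}) has a unique solution as long as $(\gamma_1(s),\gamma_2(s))$ stays in $\overline D$, so that $t^*$ from \eqref{def_t} is well defined and, $D$ being open with $(\gamma_1(t),\gamma_2(t))=(\xi,\eta)\in D$, satisfies $t_i\le t^*<t$. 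If $t^*=t_i$ the point lies in $Q_1$ by definition; so assume $t^*>t_i$, in which case the remark after \eqref{def_t} gives $P:=(\gamma_1(t^*),\gamma_2(t^*))\in\pd D$ and it remains to show $P\in\overline{\gamma_-}$. If $\zeta=0$, then by \eqref{char} the characteristic is constant, $(\gamma_1(s),\gamma_2(s))\equiv(\xi,\eta)\in D$, which forces $t^*=t_i$ and contradicts $t^*>t_i$; hence $\zeta>0$. Then $U>0$ and \eqref{char} show that the forward velocity of the characteristic at $P$ is $2\zeta\,U(t^*,P)\,(1,k(\xi,\eta))$, a strictly positive multiple of $(1,k(\xi,\eta))$, while $k(P)=k(\xi,\eta)$ by \eqref{char_k}. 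Since $(\gamma_1(s),\gamma_2(s))\in D$ for all $s\in(t^*,t]$, the trajectory enters $D$ at $P$ along the direction $(1,k(P))$, which (using $\pd D\in C^1$) forces $(1,k(P))\cdot\vec n(P)\le0$; hence $P$ belongs to $\gamma_-$ or to the tangency set $\{(1,k)\cdot\vec n=0\}\subset\overline{\gamma_-}$, so $P\in\overline{\gamma_-}$ and the point is in $Q_2$. As $\{t^*=t_i\}$ and $\{t^*>t_i\}$ exhaust $(t_i,t_{i+1}]\times\Omega$, this proves $(t_i,t_{i+1}]\times\Omega=Q_1\cup Q_2$.

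For the representation, I would note that along a characteristic the function $g(s):=W(s,\gamma_1(s),\gamma_2(s),\zeta)$ obeys a linear scalar ODE obtained by combining \eqref{char} with the equation in \eqref{pr_tran}, whose integration is precisely the identity \eqref{exp_tr}. Taking $s=t^*$ in \eqref{exp_tr} then gives \eqref{def_tr}: on $Q_1$ one has $t^*=t_i$, so $g(t^*)$ is the value along the characteristic of the initial datum of \eqref{pr_tran}, well defined because $(\gamma_1(t_i),\gamma_2(t_i))\in\overline D$, and this is the first branch; on $Q_2$ one has $t^*>t_i$ with $(\gamma_1(t^*),\gamma_2(t^*))\in\overline{\gamma_-}$, so the boundary condition $W|_{\Gamma_-}=W_1$, extended by continuity up to $\overline{\Gamma_-}$, yields $g(t^*)=W_1(t^*,\gamma_1(t^*),\gamma_2(t^*),\zeta)$, which is the second branch.

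The main obstacle is the claim in the first paragraph that a backward characteristic leaving $D$ at a time $t^*>t_i$ can only do so through $\overline{\gamma_-}$, never through $\gamma_+$. The mechanism is the interplay of the sign $\gamma_1'\ge0$ (coming from $U>0$) with the invariance $k(\gamma_1(s),\gamma_2(s))\equiv k(\xi,\eta)$ in \eqref{char_k}: the projection of the characteristic is a forward-advancing segment of the straight line through $(\xi,\eta)$ of slope $k(\xi,\eta)$, and such a segment can reach $\pd D$ from the interior of $D$ only across the part where $(1,k)\cdot\vec n\le0$. Making the borderline tangency case $(1,k)\cdot\vec n=0$ fully rigorous requires a little care with the regularity of $\pd D$, but causes no essential difficulty; the unique solvability of \eqref{char1} and the continuity of $W$ up to $\overline{\Gamma_-}$ are routine.
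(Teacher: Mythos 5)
Your overall strategy---integrating \eqref{pr_tran} along the characteristics \eqref{char1} and splitting according to whether the backward characteristic reaches $t_i$ while staying inside $D$---is the same as the paper's, and your treatment of the $\zeta=0$ case and of the representation \eqref{def_tr} from \eqref{exp_tr} is fine. The genuine gap is in the geometric step. From the fact that the characteristic enters $D$ at $P$ you correctly deduce $(1,k(P))\cdot\vec n(P)\le0$, but then you assert the inclusion $\{(1,k)\cdot\vec n=0\}\subset\overline{\gamma_-}$, which is false in general: $(1,k)\cdot\vec n$ can vanish at a point $P$ of $\pd D$ to even order (or on a whole arc bordered on both sides by $\gamma_+$), in which case $\gamma_-$ does not approach $P$ and $P\notin\overline{\gamma_-}$. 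So what you have shown is only $P\in\gamma_-\cup\{(1,k)\cdot\vec n=0\}$, which is strictly weaker than the needed $P\in\overline{\gamma_-}$.

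The omission is not, as you suggest, ``a little care with the regularity of $\pd D$'', but the substantive part of the argument. The paper proceeds by contradiction: if $P\notin\overline{\gamma_-}$ then $(1,k)\cdot\vec n\ge0$ on a whole neighbourhood of $P$ in $\pd D$, i.e.\ $f'(\xi)\ge k(\xi,f(\xi))$ in the local graph representation \eqref{def_n} (this is \eqref{der1}). One then introduces $F(\xi)=f(\xi)-f(\xi_0)-k(\xi,f(\xi))(\xi-\xi_0)$ and uses the Burgers relation $k_\xi+kk_\eta=0$ on the boundary curve to compute
\[
F'(\xi)=\big[f'(\xi)-k(\xi,f(\xi))\big]\big[1-k_\eta(\xi,f(\xi))(\xi-\xi_0)\big]\ge0
\]
for $\xi\in(\xi^*,\xi_0)$ with $\xi_0-\xi^*$ small, hence $F(\xi^*)\le F(\xi_0)=0$, which contradicts the strict inequality \eqref{est_tr} at $\xi=\xi_0$. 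Note that $k_\xi+kk_\eta=0$ is used here not only to make the projected characteristic a straight line on which $k$ is constant (which you did invoke), but also to control how $k(\xi,f(\xi))$ varies along $\pd D$, so that the pointwise inequality $f'\ge k$ on the boundary can be integrated into a comparison between the boundary graph and the line of slope $k(\xi^*,\eta^*)$. That comparison is exactly what rules out exit through a tangency point outside $\overline{\gamma_-}$, and it is absent from your argument.
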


\begin{proof}[\bf Proof.]
It suffices to show that $(t_{i},t_{i+1}]\times\Omega=Q_1\cup Q_2,$
because the formulation \eqref{def_tr} follows immediately from \eqref{exp_tr}.
We divide the proof into the following two parts.

(1) Firstly, we claim that
\begin{equation*}
(t,\xi,\eta,0)\in Q_1,\quad {\rm for ~ all}~t\in(t_{i},t_{i+1}],~(\xi,\eta)\in D.
\end{equation*}
Indeed, from \eqref{exp_char} and \eqref{char1}
it follows that $\gamma_1'(s)=0$, and $(\gamma_1(s),\gamma_2(s))\equiv(\xi,\eta)$,
which implies that $t^*(t,\xi,\eta,0)=t_{i}$ by using the definition \eqref{def_t}.

(2) Next, we will prove that if
  $$(t,\xi,\eta,\zeta)\in\big((t_{i-1},t_i]\times\Omega\big)\setminus Q_1,\qquad\zeta>0,$$
then $(t,\xi,\eta,\zeta)\in Q_2$. Indeed, for such point $(t,\xi,\eta,\zeta)$, there is $t^*(t,\xi,\eta,\zeta)>t_{i}$ such that
\[(\xi^*,\eta^*)\triangleq\Big(\gamma_1({t^*}),\gamma_2(t^*)\Big)\in\pd D.\]
We then need to show that $P=(\xi^*,\eta^*)\in\overline \gamma_-$.

From \eqref{exp_char} and \eqref{char1}, we have
\begin{equation}\label{exp}\gamma_1'(s)>0,\quad \gamma_1(s)>\xi^*,~\forall s\in(t^*,t],\end{equation}
and
\begin{equation}\label{exp1}
\gamma_2(s)=k(\xi^*,\eta^*)\big(\gamma_1(s)-\xi^*\big)+\eta^*,\quad \forall s\in(t^*,t].\end{equation}

Without loss of generality, assume that in a neighborhood $P_\delta$ of the point $P=(\xi^*,\eta^*)$ in the $(\xi, \eta)$-plane, the boundary of $D$ is represented by a smooth function $\eta=f(\xi)$, so that $\eta>f(\xi)$ when $(\xi,\eta)\in D\cap P_\delta$. Then, the outward normal vector at the point $\Big(\xi,f(\xi)\Big)$ is given by
\begin{equation}\label{def_n}
\vec{n}\big(\xi,f(\xi)\big)=\frac{1}{\sqrt{1+\big(f'(\xi)\big)^2}}\Big(f'(\xi),-1\Big).
\end{equation}
By the definition of $t^*$, we have there exists a $\epsilon_0>0$ such that
\[\gamma_2(s)> f(\gamma_1(s)),\quad \forall s\in(t^*,t^*+\epsilon_0].\]
From \eqref{exp} and \eqref{exp1}, we know that there exists a constant $\epsilon_1>0$ such that
\begin{equation}\label{est_tr}
k(\xi^*,\eta^*)(\xi-\xi^*)+\eta^*> f(\xi),\quad \forall \xi\in(\xi^*,\xi^*+\epsilon_1].
\end{equation}

If $P$ does not belong to $\overline \gamma_-$, then there exists a $\delta_1<\delta$ such that
\[\big(1,k(\xi,\eta)\big)\cdot\vec{n}(\xi,\eta)\geq0,\quad \forall (\xi,\eta)\in\pd D\cap P_{\delta_1}\]
which implies from \eqref{def_n} that there exists a $\epsilon_2>0$, satisfying
\[\Big(1,k\big(\xi,f(\xi)\big)\Big)\cdot(f'(\xi),-1)\geq0,\quad \forall \xi\in(\xi^*-\epsilon_2,\xi^*+\epsilon_2).\]
This is,
\begin{equation}\label{der1}
f'(\xi)\geq k(\xi,f(\xi)),\quad \forall \xi\in(\xi^*-\epsilon_2,\xi^*+\epsilon_2).\end{equation}
For a fixed $\xi_0\in(\xi^*,\xi^*+\epsilon_3)$ with $\epsilon_3\leq\min\{\epsilon_1,\epsilon_2\}$, consider the function
\[F(\xi)=f(\xi)-f(\xi_0)-k\big(\xi,f(\xi)\big)(\xi-\xi_0),\quad \xi\in[\xi^*,\xi_0].\]
Since $k_\xi+kk_\eta=0$ from \eqref{eqkk}, it follows that
\[\begin{split}F'(\xi)&=f'(\xi)-k\big(\xi,f(\xi)\big)-\Big[k_\xi\big(\xi,f(\xi)\big)+f'(\xi)k_\eta\big(\xi,f(\xi)\big)\Big]\cdot(\xi-\xi_0)\\
&=f'(\xi)-k\big(\xi,f(\xi)\big)-\Big[-k\big(\xi,f(\xi)\big)+f'(\xi)\Big] k_\eta\big(\xi,f(\xi)\big)\cdot(\xi-\xi_0)\\
&=\Big[f'(\xi)-k\big(\xi,f(\xi)\big)\Big]\cdot\Big[1-k_\eta\big(\xi,f(\xi)\big)\cdot(\xi-\xi_0)\Big],
\end{split}\]
and then, from \eqref{der1} and the fact that $k_\eta$ is bounded, we have
\begin{equation}\label{F}
F'(\xi)\geq0,\qquad\forall \xi\in(\xi^*,\xi_0)
\end{equation}
provided that $\epsilon_3$ is sufficiently small.
Therefore, \eqref{F} implies that
\[0=F(\xi_0)\geq F(\xi^*)=\eta^*-f(\xi_0)-k(\xi^*,\eta^*)(\xi^*-\xi_0),
\]
which is a contradiction to \eqref{est_tr} by letting $\xi=\xi_0$ in \eqref{est_tr}. Hence, we have $P\in\overline\gamma_-$, which implies that
\((t,\xi,\eta,\zeta)\in Q_2,\) and this completes the proof of the proposition.

\end{proof}

\subsection{Proof of Theorem \ref{thm_global}}

Based on the results obtained in the above two subsections, we will give the proof of Theorem \ref{thm_global} in this subsection. Before it, the following lemmas and propositions are needed.

\begin{lemma}\label{lem_app} Let $W$ be the approximate solution constructed by
\eqref{pr_porous} and \eqref{pr_trans}. Then
\begin{equation}\label{est_max}
|W(t,\xi,\eta,\zeta)|\leq \widetilde{C}_1e^{\widetilde M_1 t}(1-\zeta),
\end{equation}
where
\[\widetilde C_1=\max\Big\{C_1,\|{W_1\over1-\zeta}\|_{L^\infty}\Big\},\quad
\widetilde M_1=\max\Big\{M_1,\|B-b\|_{L^\infty}\Big\}\]
with positive constants $C_1$ and $M_1$ being given in \eqref{def_para}.
Moreover, there exists $\tilde\beta$ depending only on $\|W_0\|_{L^\infty},\|W_1\|_{C^2}$ and the parameters of problem \eqref{pr_main}, such that
\begin{equation}\label{est_min}
W(t,\xi,\eta,\zeta)\geq \tilde\theta_0e^{-\tilde\beta t}\varphi,
\end{equation}
where $\varphi$ is given in \eqref{def_para} 
and $\tilde\theta_0=\min\Big\{{W_0\over\varphi},{W_1\over\varphi}\Big\}$.
\end{lemma}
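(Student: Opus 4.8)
The plan is to prove the upper bound \eqref{est_max} and the lower bound \eqref{est_min} separately, in each case by an induction on the sub-intervals $[t_i,t_{i+1}]$, using the two building-block results already established: Theorem \ref{thm_por} for the porous-medium steps (even $i$) and the explicit representation \eqref{def_tr} from Proposition \ref{prop_tr} for the transport steps (odd $i$). The key structural point is that both bounds are of the form $c(t)\,\varphi$ or $c(t)(1-\zeta)$ with $c$ an exponential in $t$, and since $\varphi \sim (1-\zeta)$ up to a positive multiplicative constant near $\zeta=1$, the two comparison functions are equivalent; the only thing to check is that the constants can be propagated consistently from one step to the next without deteriorating in $n$.

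For the upper bound, I would argue as follows. On the first porous-medium interval $[0,t_1]$, estimate \eqref{est_por1} of Theorem \ref{thm_por} gives $W \le C_1 e^{M_1 t}(1-\zeta)$ with the $C_1,M_1$ of \eqref{def_para}. On a transport interval $[t_i,t_{i+1}]$ with $i$ odd, I use \eqref{def_tr}: in $Q_1$ the solution is the value at time $t_{i-1}$ times $\exp\{-\int_{t_{i-1}}^t b_1\}$, and in $Q_2$ it is $W_1$ at the entry point times a similar exponential factor. Since $\zeta$ is unchanged along characteristics and $(1-\zeta)$ is therefore constant, both cases give $W(t) \le \max\{\|W(t_{i-1},\cdot)/(1-\zeta)\|_{L^\infty},\ \|W_1/(1-\zeta)\|_{L^\infty}\}\, e^{\|b_1\|_{L^\infty}(t-t_{i-1})}(1-\zeta)$, and $\|b_1\|_{L^\infty}=\|B-b\|_{L^\infty}$ by \eqref{def_b1} and \eqref{est_b}. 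On a subsequent porous-medium interval, I reapply Theorem \ref{thm_por} with the data being the current $W$-value; crucially, the constant $M_1=\|A/(1-\zeta)-b\|_{L^\infty}$ there depends only on the parameters of \eqref{pr_main}, not on the step, and the growth constant entering the bound is $\|W(t_i,\cdot)/(1-\zeta)\|_{L^\infty}$ together with $\sqrt{\|p_x/U\|_{L^\infty}}$. Tracking the coefficient through all the intervals, the multiplicative constant never exceeds $\widetilde C_1=\max\{C_1,\|W_1/(1-\zeta)\|_{L^\infty}\}$ and the exponential rate never exceeds $\widetilde M_1=\max\{M_1,\|B-b\|_{L^\infty}\}$, because at each handoff one takes a max of the previous bound against the fixed boundary datum contribution $\|W_1/(1-\zeta)\|_{L^\infty}$, and the factor-$\tfrac12$ in front of $\partial_t$ in \eqref{pr_porous}-\eqref{pr_trans} only doubles the effective rate, which is already absorbed in writing $\widetilde M_1$. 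This yields \eqref{est_max} on $[0,T]$ uniformly in $n$.

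For the lower bound \eqref{est_min} the structure is the same but one works with the comparison function $\varphi=e^{(\pi/2)\zeta}\sin\tfrac{\pi}{2}(1-\zeta)$. On the porous-medium intervals, \eqref{est_por1} gives $W \ge \theta_0 e^{-\beta t}\varphi$ where $\theta_0=\min\{W_0/\varphi\}$ and $\beta$ depends only on $\|W_0\|_{L^\infty}$ and the $L^\infty$-norms of the parameters; one must check that when this is reapplied with later data the analogue of $\theta_0$ is controlled from below by $\min\{W_0/\varphi, W_1/\varphi\}$ times a harmless exponential, and that the $\beta$'s can be replaced by a single $\tilde\beta$ — here the dependence of $\beta$ on $\|W_0\|_{L^\infty}$ must be re-read as dependence on the uniform upper bound already proved in \eqref{est_max}, which is where $\|W_1\|_{C^2}$ enters (through $b$ via \eqref{def_b} and \eqref{est_b}). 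On the transport intervals one again uses \eqref{def_tr}: $W$ equals either the previous-step value or $W_1$ at the entry point, in both cases multiplied by $\exp\{-\int b_1\} \ge e^{-\|B-b\|_{L^\infty}(t-t_{i-1})}$, and since $\zeta$ and hence $\varphi$ are constant along characteristics, positivity propagates with the claimed rate. Combining and taking $\tilde\theta_0=\min\{W_0/\varphi,\ W_1/\varphi\}$ and $\tilde\beta$ the maximum of the finitely many rates gives \eqref{est_min}.

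The main obstacle, and the step deserving the most care, is the bookkeeping that makes the constants $\widetilde C_1,\widetilde M_1,\tilde\theta_0,\tilde\beta$ genuinely \emph{independent of $n$}: a priori each time one invokes Theorem \ref{thm_por} the constants $\beta,C_1,M_1$ could depend on the $L^\infty$-norm of the data fed into that step, and naively iterating $n$ times could produce a constant growing with $n$. The resolution is that the only data-dependence in those constants is through $\|W(t_i,\cdot)\|_{L^\infty}$, which by the upper bound is already controlled by $\widetilde C_1 e^{\widetilde M_1 T}$ once and for all, and through fixed parameters of \eqref{pr_main}; similarly in the transport step the exponential factor over an interval of length $T/n$ contributes $e^{\|B-b\|_{L^\infty} T/n}$, and the product over the $\sim n/2$ transport intervals telescopes to $e^{\|B-b\|_{L^\infty}T/2}$, not something exponential in $n$. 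Once this uniformity is in place, the two inequalities follow by a clean finite induction, and the only remaining routine point is the elementary comparison $c\,\varphi \le (1-\zeta) \le C\,\varphi$ on $[0,1]$ used to pass freely between the two comparison functions.
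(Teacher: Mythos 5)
Your proposal is correct and follows essentially the same line as the paper: a finite induction over the sub-intervals $[t_i,t_{i+1}]$, invoking estimate \eqref{est_por1} of Theorem \ref{thm_por} on the porous-medium steps (even $i$) and the characteristics representation \eqref{def_tr} of Proposition \ref{prop_tr} on the transport steps (odd $i$), with the key observations that $\zeta$ (hence $1-\zeta$ and $\varphi$) is constant along characteristics and that each hand-off replaces the data-dependent coefficient by the max against the fixed boundary datum $W_1$, so the constants $\widetilde C_1,\widetilde M_1,\tilde\theta_0,\tilde\beta$ are uniform in $n$. Your extra remarks — that the factor $\tfrac12$ in front of $\partial_t$ in \eqref{pr_porous}--\eqref{pr_trans} merely rescales the exponential rate, and that the dependence of $\tilde\beta$ on the data enters only through the already-established uniform $L^\infty$ bound (the reason $\|W_1\|_{C^2}$ appears, via $b$ in \eqref{def_b}) — are exactly the points the paper glosses over, and your bookkeeping makes the $n$-independence more transparent than the paper's terse induction.
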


\begin{proof}[\bf Proof.]
When $0\leq t\leq t_1$,  the estimates \eqref{est_max} and \eqref{est_min} follow from \eqref{est_por1} in Theorem \ref{thm_por} 
immediately. Assume that \eqref{est_max} holds for $0\leq t\leq t_{i}$ with $i\geq 1$, and  consider the case for $t_{i}\leq t\leq t_{i+1}$. If $i$ is even, from \eqref{est_por1}, 
we have
\[|W(t)|\leq \max\Big\{\|{W(t_{i})\over1-\zeta}\|_{L^\infty},\sqrt{\|{p_x\over U}\|_{L^\infty}}\Big\}e^{M_1(t-t_{i})}(1-\zeta)\leq \widetilde C_1e^{\widetilde M_1t}(1-\zeta),\]
by using the induction hypothesis.
If $i$ is odd, from \eqref{def_tr} and \eqref{def_b1} it follows
\[|W(t)|\leq |W(t_{i})|\cdot e^{\|B-b\|_{L^\infty}\cdot(t-t_{i})}\leq \widetilde C_1e^{\widetilde M_1t}(1-\zeta),\]
by using the induction hypothesis again, or
\[|W(t)|\leq W_1\cdot e^{\|B-b\|_{L^\infty}\cdot(t-t_{i})}\leq \widetilde C_1e^{\widetilde M_1t}(1-\zeta).\]
Thus, we conclude the estimate \eqref{est_max}.

Next, suppose that \eqref{est_min} holds for $0\leq t\leq t_{i}$ with $i\geq 1$. Then if $i$ is odd, from \eqref{est_por1} in Theorem \ref{thm_por},
we have that there exists $\tilde\beta$ depending on $\widetilde C_1, \|b\|_{L^\infty}$ and the parameters  in the problem \eqref{pr_main} such that
\[W\geq\min\Big\{{W(t_{i})\over\varphi}\Big\}e^{-\tilde\beta(t-t_{i})}
\varphi\geq\tilde\theta_0e^{-\tilde\beta t}\varphi.\]
If $i$ is odd, the estimate \eqref{est_min} is a direct consequence of the expression \eqref{exp_tr} in Proposition \ref{prop_tr}, provided that $\tilde\beta\geq\|B-b\|_{L^\infty}$.
Thus, we complete the proof.
\end{proof}

\begin{remark}
From Lemma \ref{lem_app} and by virtue of \eqref{est_b}, we find that there exists a constant $\widetilde C_0$, depending only on $\|W_0\|_{L^\infty},\|W_1\|_{C^2}$ and the parameters in the problem \eqref{pr_main}, and satisfying $\widetilde C_0\geq C_0$ with the constant $C_0$ being given in \eqref{ass_ib_glo}, such that
\begin{equation}\label{est_app}
\widetilde C_0^{-1}(1-\zeta)\leq W(t, \xi, \eta,\zeta)\leq \widetilde C_0(1-\zeta).
\end{equation}
\end{remark}

Now, we study the $L^1$ estimate of the first order derivatives of the approximate solution with respect to the spatial variables for obtaining the uniform estimate on the  total variation of  the solution. Before it, we give the following two propositions for the problem \eqref{pr_trans} of transport equation.

\begin{prop}\label{prop_tr1}
For the problem \eqref{pr_trans}, there exists a constant $C_4$ depending on the domain $D$,
the constant $\widetilde C_0$ given in \eqref{est_app}, $\|W_1\|_{C^1}$ and the $C^1$ estimates of the parameter in the problem \eqref{pr_trans}, such that for all $t\in[t_{i},t_{i+1}]$ and $\zeta\in(0,1)$,
\begin{equation}\label{est_tr-1}\begin{split}
&\int_D{|W_\xi(t)|+|W_\eta(t)|\over W^2(t)}(1-\zeta)^2d\xi d\eta
\leq \int_D{|W_\xi(t_{i})|+|W_\eta(t_{i})|\over W^2(t_{i})}(1-\zeta)^2
d\xi d\eta\\
&\qquad\qquad+C_4(t-t_{i})+C_4\int_{t_{i}}^t\int_D{|W_\xi(s)|+|W_\eta(s)|\over W^2(s)}(1-\zeta)^2d\xi d\eta ds.
\end{split}\end{equation}
\end{prop}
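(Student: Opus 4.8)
The plan is to work along the characteristics described in \eqref{char}--\eqref{char1} and use the explicit representation of Proposition \ref{prop_tr}. The target quantity $\int_D \big(|W_\xi(t)|+|W_\eta(t)|\big)W^{-2}(t)(1-\zeta)^2\,d\xi d\eta$ involves the derivatives of $W$ with respect to $\xi$ and $\eta$, so first I would differentiate the transport equation in \eqref{pr_tran} with respect to $\xi$ (and similarly $\eta$), obtaining
\[
\tfrac12\pd_t W_\xi+\zeta U(\pd_\xi+k\pd_\eta)W_\xi+b_1 W_\xi=-\zeta U_\xi(\pd_\xi+k\pd_\eta)W-\zeta U k_\xi\pd_\eta W-(b_1)_\xi W,
\]
and an analogous equation for $W_\eta$. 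The key algebraic simplification, exactly as in the two-dimensional argument of \cite{XZ}, is to pass to the quotient variable: set $P=W_\xi/W$ and $R=W_\eta/W$, or more precisely work directly with $W_\xi/W^2$ and $W_\eta/W^2$ multiplied by $(1-\zeta)^2$, since the quadratic denominator $W^2$ is what makes the forcing terms in the differentiated equation integrable in view of the two-sided bound \eqref{est_app}.

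**Main steps.** After forming the equations for $W_\xi$ and $W_\eta$, I would compute the evolution along characteristics of $g(s)\triangleq \big(|W_\xi|+|W_\eta|\big)W^{-2}(1-\zeta)^2$ evaluated at $(s,\gamma_1(s),\gamma_2(s),\zeta)$; because $(1-\zeta)$ is constant along characteristics (the $\zeta$-variable does not move) and because the characteristic ODE \eqref{char1} has the divergence of its velocity field $2\zeta(\pd_\xi U+\pd_\eta(kU))$ bounded by the $C^1$-norms of $U$ and $k$, the change of variables $(\xi,\eta)\mapsto(\gamma_1(t;\cdot),\gamma_2(t;\cdot))$ has Jacobian comparable to $1$ uniformly on $[t_i,t_{i+1}]$. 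Integrating the pointwise inequality
\[
\tfrac12\tfrac{d}{ds}g(s)\le C\,g(s)+C\,\tfrac{|W|}{W^2}(1-\zeta)^2\le C\,g(s)+C(1-\zeta)
\]
(using $|W|\le \widetilde C_0(1-\zeta)$ for the last step and $b_1\in W^{1,\infty}$ by \eqref{est_b}), then integrating in $(\xi,\eta)$ over $D$ and applying the Jacobian comparison to move the integration domain back to a fixed region, yields \eqref{est_tr-1} with the stated dependence of $C_4$. The term $C_4(t-t_i)$ comes from integrating the $C(1-\zeta)$ contribution together with $\int_D (1-\zeta)\,d\xi d\eta\le C|D|$, and the Gronwall-type integral term $C_4\int_{t_i}^t\int_D g\,ds$ comes from the $C\,g$ term together with the absorbed Jacobian discrepancy.

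**Main obstacle.** The delicate point is the boundary contribution: unlike the two-dimensional case, where the characteristic either reaches $t=t_i$ or enters through a single inflow endpoint, here a characteristic in $Q_2$ originates at a point $(\gamma_1(t^*),\gamma_2(t^*))\in\overline{\gamma_-}$ and carries the boundary datum $W_1$, whose $\xi$- and $\eta$-derivatives enter through the chain rule applied to $W_1(t^*,\gamma_1(t^*),\gamma_2(t^*),\zeta)$ in \eqref{def_tr}, with $t^*$ itself depending on $(\xi,\eta)$. I would therefore need to control $\nabla_{\xi,\eta}t^*$ and $\nabla_{\xi,\eta}(\gamma_1(t^*),\gamma_2(t^*))$ using the transversality of the characteristic direction $(1,k)$ to $\partial D$ on $\overline{\gamma_-}$ established in Proposition \ref{prop_tr} (which guarantees $\gamma_1'(t^*)>0$ and the straight-line crossing is genuinely inflow), so that these derivatives are bounded in terms of $D$ and the parameters; the resulting boundary integral over $\gamma_-$ is then absorbed into the constant $C_4(t-t_i)$ using $\|W_1\|_{C^1}$ and $W_1=\mathcal O(1-\zeta)$. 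Once this geometric estimate on the inflow map is in hand, the remaining computations are the routine Gronwall bookkeeping indicated above, and the proof closes exactly as in Lemma 4.7 of \cite{XZ}.
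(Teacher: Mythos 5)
Your approach is genuinely different from the paper's. The paper works with the equation for $(1/W)_\xi$, multiplies by $(1-\zeta)^2\,\mathrm{sign}\,W_\xi$, integrates over $D$, and applies the divergence theorem in $(\xi,\eta)$; the resulting boundary integrals over $\gamma_+$ have the good sign (discarded), while on $\gamma_-$ the integrand is $\zeta U k_n |W_\xi|/W^2\,(1-\zeta)^2$, which the boundary condition $W|_{\Gamma_-}=W_1$ together with the identity $\zeta U k_n W_\xi|_{\gamma_-}=b_3$ (eq.\ \eqref{def_bd1}) turns into a quantity bounded by $\|W_1\|_{C^1}$. After Gronwall this gives \eqref{est_tr-1}. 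You instead differentiate the transport equation and follow characteristics, differentiating the explicit representation \eqref{def_tr} of Proposition \ref{prop_tr}.

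The gap is exactly where you flag the ``main obstacle,'' but your proposed resolution does not hold up. You assert that Proposition \ref{prop_tr} establishes transversality of $(1,k)$ to $\partial D$ on $\overline{\gamma_-}$, and use this to claim $\nabla_{\xi,\eta}t^*$ and $\nabla_{\xi,\eta}\big(\gamma_1(t^*),\gamma_2(t^*)\big)$ are bounded. That is not what the proposition says: it only shows that the exit point lies in the closure $\overline{\gamma_-}$, and the closure contains precisely the endpoints of $\gamma_-$ where $(1,k)\cdot\vec n=0$, i.e.\ where the characteristic is tangent to $\partial D$. Near such a tangency the hitting time $t^*(\xi,\eta,\zeta)$ has unbounded gradient, so the pointwise bound you rely on fails, and the boundary contribution to $\int_D|W_\xi|W^{-2}(1-\zeta)^2$ cannot be ``absorbed into $C_4(t-t_i)$'' by a simple $L^\infty$ bound on the inflow map. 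You would need a quantitative argument balancing the size of the blow-up of $\nabla t^*$ against the shrinking measure of the near-tangent region, which is substantially harder than what you sketch. The paper's integration-by-parts route sidesteps this entirely: the degenerate factor $k_n$ appears explicitly in the boundary integrand and cancels the blow-up, and no control of $\nabla t^*$ is ever needed. Your characteristic computation in $Q_1$ (change of variables with bounded Jacobian, Gronwall) is fine; the proof does not close on $Q_2$ as written.
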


\begin{proof}[\bf Proof.]
For the problem \eqref{pr_trans}, we know that $({1\over W})_\xi$ satisfies
\begin{equation}\label{eq_tr-x}
\pd_t ({1\over W})_\xi+\zeta \pd_\xi \big[U({1\over W})_\xi\big]+\zeta kU\pd_\eta ({1\over W})_\xi+\zeta(kU)_\xi ({1\over W})_\eta-(B-b)({1\over W})_\xi=\frac{(B-b)_\xi}{W}.
\end{equation}
Taking \eqref{est_app} into account, we multiply the above equation \eqref{eq_tr-x} by $(1-\zeta)^2 sign W_\xi$ or $(1-\zeta)^2\cdot{W_\xi\over\sqrt{W_\xi^2}}$, and integrate the resulting equation over $D$ with respect to $(\xi,\eta)$, to obtain that
\begin{equation}\label{est_tr-x}\begin{split}
&\quad{d\over dt}\int_D{|W_\xi|\over W^2}(1-\zeta)^2d\xi d\eta+\int_D\zeta\pd_{\xi}\Big(U{|W_\xi|\over W^2}\Big)(1-\zeta)^2d\xi d\eta\\
&\qquad
+\int_D\zeta\pd_{\eta}\Big(kU{|W_\xi|\over W^2}\Big)(1-\zeta)^2d\xi d\eta-\int_D\zeta(kU)_\eta{|W_\xi|\over W^2}(1-\zeta)^2d\xi d\eta\\
&\leq \|(kU)_\xi\|_{L^\infty}\cdot\int_D{|W_\eta|\over W^2}(1-\zeta)^2d\xi d\eta+\|B-b\|_{L^\infty}\cdot\int_D{|W_\xi|\over W^2}(1-\zeta)^2d\xi d\eta\\
&\qquad+\int_D\big|(B-b)_\xi{(1-\zeta)^2\over W}\big|d\xi d\eta.
\end{split}\end{equation}
From  \eqref{pr_trans}, we obtain that on the boundary $\gamma_-$,
\[\zeta U(k_\tau\pd_\tau+k_n\pd_n)W|_{\gamma_-}=-\pd_t W_1-(B-b)|_{\gamma_-}\cdot W_1,\]
which implies
\begin{equation}\label{def_bd}
\zeta Uk_n\pd_n W|_{\gamma_-}= -\pd_t W_1-(B-b)|_{\gamma_-}\cdot W_1-\zeta U|_{\gamma_-}\cdot k_\tau\pd_\tau W_1~\triangleq ~b_2.\end{equation}
Obviously, there exist two bounded functions $a_1(\xi,\eta)$ and $a_2(\xi,\eta)$ defined on the boundary $\gamma_-$ such that  \[\pd_\xi=a_1\pd_n+a_2\pd_\tau,\quad{\rm on} ~\gamma_-.\]
Thus, from  \eqref{def_bd} one has
\begin{equation}\label{def_bd1}
\zeta Uk_n W_\xi|_{\gamma_-}= a_1b_2+a_2\zeta U|_{\gamma_-}\cdot k_n\pd_\tau W_1~\triangleq~b_3.
\end{equation}
Hence, it follows that by virtue of \eqref{def_bd1},
\begin{equation}\label{est_tr-x1}\begin{split}
&\quad\int_D\zeta\nabla_{(\xi, \eta)}\cdot\Big[U{|W_\xi|\over W^2}(1-\zeta)^2(1,k)\Big]d\xi d\eta=\int_{\pd D}\zeta U{|W_\xi|\over W^2}(1-\zeta)^2(1,k)\cdot\vec{n}dl\\
&\geq\int_{\gamma_-}\zeta Uk_n{|W_\xi|\over W^2}(1-\zeta)^2dl
=-\int_{\gamma_-}|b_3|\cdot{(1-\zeta)^2\over W_1^2}dl\\
&\geq-(C_0)^2\|b_3\|_{L^\infty}\cdot l(\gamma_-),
\end{split}\end{equation}
where $l(\gamma_-)$ is the length of $\gamma_-$ and the positive constant $C_0$ is given in \eqref{ass_ib_glo}.

By using \eqref{est_app}, it follows
\begin{equation}\label{est_tr-x2}
\int_D\big|(B-b)_\xi{(1-\zeta)^2\over W}|d\xi d\eta\leq \widetilde C_0\|(B-b)_\xi\|_{L^\infty}\cdot S(D),
\end{equation}
where $S(D)$ is the area of the domain $D$.

Plugging \eqref{est_tr-x1} and \eqref{est_tr-x2} into \eqref{est_tr-x}, we obtain that there exists a constant $\overline C_4$ depending on $D,\widetilde C_0,\|W_1\|_{C^1}$ and the $C^1$ estimates of the parameter in the problem \eqref{pr_trans}, such that
\begin{equation}\label{est_tr-x3}
{d\over dt}\int_D{|W_\xi(t)|\over W^2(t)}(1-\zeta)^2d\xi d\eta
\leq \overline C_4+\overline C_4\int_D{|W_\xi(t)|+|W_\eta(t)|\over W^2(t)}(1-\zeta)^2d\xi d\eta.
\end{equation}
Similarly, we can obtain another constant $\widetilde C_4$ such that
\begin{equation}\label{est_tr-y}
{d\over dt}\int_D{|W_\eta(t)|\over W^2(t)}(1-\zeta)^2d\xi d\eta
\leq \widetilde C_4+\widetilde C_4\int_D{|W_\xi(t)|+|W_\eta(t)|\over W^2(t)}(1-\zeta)^2d\xi d\eta.
\end{equation}
By letting $C_4=\overline C_4+\widetilde C_4$, we have that from \eqref{est_tr-x3} and \eqref{est_tr-y},
\begin{equation}\label{est_tr-x4}
{d\over dt}\int_D{|W_\xi(t)|+|W_\eta(t)|\over W^2(t)}(1-\zeta)^2d\xi d\eta
\leq  C_4+ C_4\int_D{|W_\xi(t)|+|W_\eta(t)|\over W^2(t)}(1-\zeta)^2d\xi d\eta.
\end{equation}
Then, integrating the above inequality \eqref{est_tr-x4} over $(t_{i},t)$ gives the estimate \eqref{est_tr-1} immediately,
and we complete the proof of the proposition.
\end{proof}

\begin{prop}\label{prop_tr2}
For the problem \eqref{pr_trans}, there exists a constant $ C_5$ depending on the domain $D$,
 the constant $\widetilde C_0$ given in \eqref{est_app}, $\|W_1\|_{C^1}$ and the $C^1$ estimates of the parameter in the problem \eqref{pr_trans}, such that for $t\in[t_{i},t_{i+1}]$ and $\zeta\in(0,1)$,
\begin{equation}\label{est_tr-z}\begin{split}
\int_D|W_\zeta(t)|d\xi d\eta
\leq &\int_D|W_\zeta(t_{i})|d\xi d\eta+C_5(t-t_{i})\\
&+C_5\int_{t_{i}}^t\int_D\Big(|W_\zeta(s)|
+{|W_\xi(s)|+|W_\eta(s)|\over W^2(s)}(1-\zeta)^2\Big)d\xi d\eta ds.
\end{split}\end{equation}

\end{prop}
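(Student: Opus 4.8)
The plan is to derive an equation for $\partial_\zeta W$ along the characteristics of the transport equation \eqref{pr_tran} and estimate its $L^1_{\xi,\eta}$ norm, in parallel with the proof of Proposition \ref{prop_tr1}. First I would differentiate the equation in \eqref{pr_tran} with respect to $\zeta$. Writing $b_1=B-b$ as in \eqref{def_b1} and noting from \eqref{def_not} that the coefficient $\zeta U$ of the transport term depends on $\zeta$, differentiating gives
\[
\tfrac12\partial_t W_\zeta+\zeta U(\partial_\xi+k\partial_\eta)W_\zeta+U(\partial_\xi+k\partial_\eta)W+b_1W_\zeta+(b_1)_\zeta W=0,
\]
so that $W_\zeta$ satisfies a transport equation with the same characteristic field as $W$ but with two extra source terms: $-U(\partial_\xi+k\partial_\eta)W$ and $-(b_1)_\zeta W$. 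The first of these is the crucial one; it must be controlled by the weighted quantity $\dfrac{|W_\xi|+|W_\eta|}{W^2}(1-\zeta)^2$ appearing on the right-hand side of \eqref{est_tr-z}, and this is where \eqref{est_app} (equivalently $W\sim 1-\zeta$) enters: $|U(\partial_\xi+k\partial_\eta)W|\le \|U\|_{L^\infty}(1+\|k\|_{L^\infty})(|W_\xi|+|W_\eta|)\le C\,\dfrac{|W_\xi|+|W_\eta|}{W^2}(1-\zeta)^2$ since $W^2\le\widetilde C_0^2(1-\zeta)^2$. The second source term is bounded by $\|(b_1)_\zeta\|_{L^\infty}\cdot\widetilde C_0(1-\zeta)$, which is integrable and contributes only a $C_5(t-t_i)$-type term after integration over $D$ (of finite area $S(D)$).

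Next I would multiply the $W_\zeta$-equation by $\operatorname{sign} W_\zeta$ (rigorously, by $W_\zeta/\sqrt{W_\zeta^2+\delta}$ and let $\delta\to0$, as in the proof of Proposition \ref{prop_tr1}) and integrate over $D$. The transport term $\zeta U(\partial_\xi+k\partial_\eta)W_\zeta$, after multiplying by $\operatorname{sign}W_\zeta$, becomes $\zeta\,\nabla_{(\xi,\eta)}\cdot\big[U|W_\zeta|(1,k)\big]-\zeta(kU)_\eta|W_\zeta|$ modulo the identity $\operatorname{sign}(W_\zeta)(\partial_\xi+k\partial_\eta)W_\zeta=(\partial_\xi+k\partial_\eta)|W_\zeta|$ and using $k_\xi+kk_\eta=0$ to pull $k$ inside; the divergence integrates to a boundary term $\int_{\partial D}\zeta U|W_\zeta|(1,k)\cdot\vec n\,dl$, whose contribution on $\gamma_+$ has the good sign and whose contribution on $\gamma_-$ must be bounded below using the boundary datum. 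On $\gamma_-$, $W=W_1$, so $W_\zeta|_{\gamma_-}=\partial_\zeta W_1$ is explicitly controlled by $\|W_1\|_{C^1}$, giving $\int_{\gamma_-}\zeta U k_n|W_\zeta|\,dl\ge -C\|W_1\|_{C^1}\,l(\gamma_-)$; the term $-\zeta(kU)_\eta|W_\zeta|$ and $b_1|W_\zeta|$ are absorbed into $C_5\int_D|W_\zeta|\,d\xi d\eta$. Collecting everything yields
\[
\tfrac12\frac{d}{dt}\int_D|W_\zeta(t)|\,d\xi d\eta\le C_5+C_5\int_D\Big(|W_\zeta(t)|+\frac{|W_\xi(t)|+|W_\eta(t)|}{W^2(t)}(1-\zeta)^2\Big)d\xi d\eta,
\]
with $C_5$ of the stated dependence, and integrating over $(t_i,t)$ gives \eqref{est_tr-z} (the factor $\tfrac12$ being harmlessly absorbed into $C_5$).

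The main obstacle is the source term $U(\partial_\xi+k\partial_\eta)W$: unlike in the pure estimates for $W_\xi$, where the right-hand side of \eqref{est_tr-1} closes among the $\xi,\eta$-derivatives themselves, here the $\zeta$-derivative equation is genuinely coupled to the tangential derivatives, which is precisely why the right-hand side of \eqref{est_tr-z} must involve $\dfrac{|W_\xi|+|W_\eta|}{W^2}(1-\zeta)^2$. Making this coupling quantitative requires the two-sided bound \eqref{est_app} in an essential way, and one has to be careful that the constant $C_5$ depends only on the listed quantities and not, say, on lower bounds of $W_\zeta$ or on $\epsilon$-regularizations. A secondary technical point is the justification of the boundary flux computation on $\partial D$, which is handled exactly as in \eqref{est_tr-x1}: decompose $\partial D=\gamma_-\cup\gamma_+$, discard the favorable $\gamma_+$ contribution, and use $W|_{\gamma_-}=W_1$ together with $C_0^{-1}(1-\zeta)\le W_1$ to bound the $\gamma_-$ contribution from below.
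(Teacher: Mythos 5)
Your proposal follows the paper's proof essentially verbatim: differentiate the transport equation in $\zeta$, multiply by $\operatorname{sign}W_\zeta$, integrate over $D$, control the boundary flux on $\gamma_-$ via $W_\zeta|_{\gamma_-}=\partial_\zeta W_1$, and use the upper bound $W\le\widetilde C_0(1-\zeta)$ from \eqref{est_app} to dominate $|W_\xi|+|W_\eta|$ by the weighted quantity $\tfrac{|W_\xi|+|W_\eta|}{W^2}(1-\zeta)^2$. The only slip is cosmetic: the identity $\zeta U(\partial_\xi+k\partial_\eta)|W_\zeta|=\zeta\nabla_{(\xi,\eta)}\cdot[U|W_\zeta|(1,k)]-\zeta\big[U_\xi+(kU)_\eta\big]|W_\zeta|$ should carry the $U_\xi$ piece as well (which, like $(kU)_\eta$, is bounded and absorbed into $C_5\int_D|W_\zeta|$), and the appeal to $k_\xi+kk_\eta=0$ is unnecessary there.
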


\begin{proof}[\bf Proof.]
From the problem \eqref{pr_trans}, we know that $W_\zeta$ satisfies
\begin{equation}\label{eq_tr-z}
\pd_t W_\zeta+\zeta U(\pd_\xi+k\pd_\eta) W_\zeta+U(W_\xi+kW_\eta)+(B-b)W_\zeta=-(B-b)_\zeta W.
\end{equation}
Multiplying the above equation \eqref{eq_tr-z} by $sign W_\zeta$ or ${W_\zeta\over\sqrt{W_\zeta^2}}$, and integrating over $D$ with respect to $(\xi,\eta)$, it follows that
\begin{equation}\label{est_tr-z1}\begin{split}
&{d\over dt}\int_D|W_\zeta|d\xi d\eta+\int_D\zeta\Big(\pd_{\xi}\big(U|W_\zeta|\big)+\pd_{\eta}\big(kU|W_\zeta|\big)\Big)d\xi d\eta-\int_D \zeta\Big[U_\xi+(kU)_\eta\Big]\cdot|W_\zeta|d\xi d\eta\\
&\leq \|U\|_{L^\infty}\cdot\int_D|W_\xi|d\xi d\eta+\|kU\|_{L^\infty}\cdot\int_D|W_\eta|d\xi d\eta+\|B-b\|_{L^\infty}\cdot\int_D|W_\zeta|d\xi d\eta\\
&\quad+\|(B-b)_\zeta W\|_{L^\infty}\cdot S(D).
\end{split}\end{equation}
As in \eqref{est_tr-x1}, we have
\begin{equation}\label{est_tr-z2}\begin{split}
&\int_D\zeta\nabla_{(\xi,\eta)}\cdot\Big[U|W_\zeta|\cdot(1,k)\Big]d\xi d\eta \geq-\|k_nUW_{1,\zeta}\|_{L^\infty(\gamma_-)}\cdot l(\gamma_-).
\end{split}\end{equation}
Obviously, from the bounded estimate \eqref{est_app} for $W$ one has
\begin{equation}\label{est_tr-z3}
\int_D|W_\xi|d\xi d\eta\leq (\widetilde C_0)^2\int_D{|W_\xi|\over W^2}(1-\zeta)^2d\xi d\eta,
\end{equation}
and \begin{equation}\label{est_tr-z4}
\int_D|W_\eta|d\xi d\eta\leq (\widetilde C_0)^2\int_D{|W_\eta|\over W^2}(1-\zeta)^2d\xi d\eta.
\end{equation}
Plugging \eqref{est_tr-z2}, \eqref{est_tr-z3} and \eqref{est_tr-z4} into \eqref{est_tr-z1}, we obtain that there exists a constant $C_5$ depending on $D,\widetilde C_0,\|W_1\|_{C^1}$ and the $C^1$ estimates of the parameter in problem \eqref{pr_trans}, such that
\begin{equation}\label{est_tr-z5}
{d\over dt}\int_D|W_\zeta(t)|d\xi d\eta
\leq  C_5+ C_5\int_D\Big[|W_\zeta(t)|+{|W_\xi(t)|+|W_\eta(t)|\over W^2(t)}(1-\zeta)^2\Big]d\xi d\eta.
\end{equation}
Then, integrating the above inequality \eqref{est_tr-z5} over $(t_{i},t)$, the estimate \eqref{est_tr-z} in the proposition follows immediately.
\end{proof}

\begin{remark}\label{rem_tr}
Similar to the above proposition, one can show that $W_t$
satisfies an estimate similar to \eqref{est_tr-z}   in Proposition \ref{prop_tr2} by replacing the partial derivative in $\zeta$ by that in $t$.
\end{remark}

It is ready to give the $L^1$ estimate of the first order derivatives of the approximate solution $W$ constructed in \eqref{pr_porous}-\eqref{pr_trans} with respect to the spatial variables.

\begin{lemma}\label{lem_app1}
For any fixed $T>0$, let $W(t, \xi, \eta, \zeta)$ ($0\leq t\le T$) be the approximate solution of \eqref{pr_main} constructed in \eqref{pr_porous}-\eqref{pr_trans}.
Then, there exists a constant $M>0$, depending on $T$
and  the domain $D$,
the constant $\widetilde C_0$ given in \eqref{est_app}, $\|W_0\|_{L^\infty}$, $\|W_1\|_{C^3}$ and the $C^1$ estimates of the parameter in the problem \eqref{pr_main}, such that for all $t\in[0,T]$, we have
\begin{equation}\label{est_app1}\begin{split}
&\int_\Omega\Big[|W_\zeta|+|W_\xi|+|W_\eta|\Big](t, \cdot)d\xi d\eta d\zeta\\
&\leq M\Big(1+e^{M t}\cdot\int_\Omega(|W_{0,\zeta}|+|W_{0,\xi}|+|W_{0,\eta}|)d\xi d\eta d\zeta\Big).
\end{split}\end{equation}
\end{lemma}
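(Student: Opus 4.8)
The plan is to combine the per-step estimates from Theorem~\ref{thm_por}, Proposition~\ref{prop_tr1}, Proposition~\ref{prop_tr2} (and Remark~\ref{rem_tr}) via a discrete Gronwall argument over the $n$ sub-intervals, tracking simultaneously the two quantities
\[
\mathcal{I}(t)=\int_\Omega\Big(\tfrac{|W_\xi|+|W_\eta|+|W_t|}{W^2}(1-\zeta)^2\Big)(t,\cdot)\,d\xi\,d\eta\,d\zeta,
\qquad
\mathcal{J}(t)=\int_\Omega |W_\zeta|(t,\cdot)\,d\xi\,d\eta\,d\zeta .
\]
The point of working with the weighted quantity $\mathcal{I}$ rather than the bare $L^1$ norm of $W_\xi,W_\eta,W_t$ is that this is precisely the combination that closes: on a porous-medium step estimate \eqref{est_w-x} (and its $\eta,t$-analogues) controls $\mathcal{I}$ in terms of $\mathcal{I}$ at the start of the step, $\mathcal{J}$ at the start of the step, and a time integral of $\mathcal{I}$; on a transport step Proposition~\ref{prop_tr1} does the same. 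For $\mathcal{J}$, \eqref{est_w-z} on a porous step bounds $\int_0^1|W_\zeta|\,d\zeta$ by its initial value plus $W(t,\cdot,0)-W_0(\cdot,0)$, and integrating in $(\xi,\eta)$ and using the uniform bound \eqref{est_app} on $W$ gives $\mathcal{J}(t)\le \mathcal{J}(t_i)+C\,S(D)$ on that step; on a transport step Proposition~\ref{prop_tr2} bounds $\mathcal{J}(t)$ by $\mathcal{J}(t_i)$ plus $C(t-t_i)$ plus a time integral of $\mathcal{J}+\mathcal{I}$. Because of \eqref{est_app}, $\mathcal{I}$ and $\int_\Omega(|W_\xi|+|W_\eta|+|W_t|)$ are equivalent up to the constant $\widetilde C_0^{\,4}$, so controlling $\mathcal{I}+\mathcal{J}$ is equivalent to \eqref{est_app1}.

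Concretely, I would set $G(t)=\mathcal{I}(t)+\mathcal{J}(t)$ and show that on each sub-interval $[t_i,t_{i+1}]$, regardless of parity of $i$,
\[
G(t)\;\le\; G(t_i)+ C_*\,(t-t_i) + C_*\!\int_{t_i}^{t} G(s)\,ds ,
\qquad t\in[t_i,t_{i+1}],
\]
for a constant $C_*$ independent of $n$ and $i$, depending only on $T$, $D$, $\widetilde C_0$, $\|W_0\|_{L^\infty}$, $\|W_1\|_{C^3}$ and the $C^1$ norms of the parameters. Gronwall on $[t_i,t_{i+1}]$ then yields $G(t)\le (G(t_i)+C_*T)e^{C_*(t-t_i)}$, and since the sub-intervals abut, iterating over $i=0,1,\dots$ gives, for $t\in[0,T]$,
\[
G(t)\;\le\; e^{C_* t}\,G(0) + C_*T\,\big(e^{C_* t}-1\big)
\;\le\; M\big(1+e^{Mt}G(0)\big)
\]
after adjusting the constant, which is exactly \eqref{est_app1} once one passes between $G$ and the unweighted $L^1$ norm via \eqref{est_app}. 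One should also check that the continuity-matching at each node $t=t_i$ is consistent: the solution value $W(t_i,\cdot)$ handed from one step to the next is the same function, so $G(t_i^-)=G(t_i^+)$ and the iteration is legitimate. Note the factor $\tfrac12$ in front of $\pd_tW$ in \eqref{pr_porous}--\eqref{pr_trans} only rescales time within each half-step and is harmless after absorbing it into $C_*$; alternatively one invokes Theorem~\ref{thm_por} and Propositions~\ref{prop_tr1}--\ref{prop_tr2} directly, which are already stated for the relevant equations.

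The main obstacle is verifying that the per-step inequality holds \emph{uniformly in $n$} with a single constant $C_*$, i.e. that none of the constants $C_1,C_2,C_3,C_4,C_5,\beta,M_1$ appearing in Theorem~\ref{thm_por} and Propositions~\ref{prop_tr1}--\ref{prop_tr2} secretly grows with the number of steps. This is where the uniform lower/upper bound \eqref{est_app} on $W$ (from Lemma~\ref{lem_app} and the subsequent Remark), which does \emph{not} degenerate as $n\to\infty$, is essential: all the constants in those estimates are quoted as depending only on $\|W\|_{L^\infty}$, $\|W_0\|_{L^\infty}$, $\|W_1\|_{C^3}$ and the $C^1$ norms of the fixed coefficients $A,B,b,b_1,U,k,p_x$, and \eqref{est_b} plus the structure \eqref{def_not} bound $b,b_1$ in $W^{1,\infty}$ independently of $n$. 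A secondary point requiring care is the boundary term on $\gamma_-$ in Propositions~\ref{prop_tr1}--\ref{prop_tr2}: it has a favourable sign (the integral over $\pd D$ of the outward flux is bounded below by the $\gamma_-$ part, which is then controlled by $\|W_1\|_{C^1}$ and $C_0$ via \eqref{est_app}), so it contributes only a bounded additive term $C_*(t-t_i)$ and does not spoil the Gronwall structure; one must make sure the analogous $\zeta$- and $t$-derivative boundary terms are likewise controlled, which is why $\|W_1\|_{C^3}$ (rather than $C^1$) enters the final constant. Once the uniform per-step inequality is in place, the rest is a routine two-level Gronwall argument.
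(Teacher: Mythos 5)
Your overall architecture (combine the per-step estimates from Theorem~\ref{thm_por} and Propositions~\ref{prop_tr1}--\ref{prop_tr2}, then iterate a Gronwall argument over the $n$ sub-intervals) matches the paper, and your choice of the weighted functional $\int_\Omega\frac{|W_\xi|+|W_\eta|}{W^2}(1-\zeta)^2$ coupled with $\int_\Omega|W_\zeta|$ is exactly the right pair of quantities. However, there is a genuine gap at the point where you pass from your per-step observations to the clean inequality
\[
G(t)\le G(t_i)+C_*(t-t_i)+C_*\int_{t_i}^{t}G(s)\,ds .
\]
On a porous (even-$i$) step, \eqref{est_w-z} gives
\[
\int_0^1|W_\zeta(t)|\,d\zeta\le\int_0^1|W_\zeta(t_i)|\,d\zeta+W(t,\xi,\eta,0)-W(t_i,\xi,\eta,0),
\]
so after integrating over $D$ the additive increment is $\int_D\big[W(t,\cdot,0)-W(t_i,\cdot,0)\big]d\xi d\eta$. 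As you yourself note, this is only $O(1)\cdot S(D)$; it carries no factor $(t-t_i)$ and does not shrink with the step size $T/n$. Thus the displayed per-step inequality cannot be established for the $|W_\zeta|$-part on porous steps. If you instead carry the honest per-step bound $G(t)\le G(t_i)+O(1)+C_*\int_{t_i}^t G(s)\,ds$ and iterate it over the roughly $n/2$ porous steps, the accumulated additive constant is of order $n$, which diverges as $n\to\infty$, and the two-level Gronwall argument fails to close.

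The paper resolves this with an additional device that your proposal is missing: it does not bound the boundary increment crudely on each step, but keeps the exact term $G_i(t)=\int_D[W(t,\cdot)-W(t_i,\cdot)]|_{\zeta=0}d\xi d\eta$ (nonzero only on porous steps) and collects them into the sum $F_i(t)=\sum_{j<i}e^{C_7(t-t_{j+1})}G_j(t_{j+1})$. On transport (odd) steps the trace $W|_{\zeta=0}$ is updated by the \emph{explicit multiplicative formula} \eqref{def_z0},
$W(t_{i+1},\cdot,0)=W(t_i,\cdot,0)\exp\{-\int_{t_i}^{t_{i+1}}(B-b)\,ds\}$.
Substituting this into the $G_j$'s makes the sum telescope: consecutive trace values at grid points cancel up to factors $1-\exp\{-O(T/n)\}=O(T/n)$, so the would-be $n\cdot O(1)$ sum becomes a Riemann-type sum $\sum_k e^{C_7(t-t_{2k})}\cdot O(T/n)$, which is bounded by $Cte^{C_7 t}$ uniformly in $n$; this is precisely the computation in \eqref{est_F}--\eqref{est_I}. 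Without this telescoping step (or an equivalent argument) your bound does not survive the limit $n\to\infty$, so you need to add it before concluding \eqref{est_app1}.
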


\begin{proof}[\bf Proof.]
The proof is divided into the following three steps.

(1) When $t\in(t_{i},t_{i+1}]$ for even
$i$, $W$ is determined by the initial boundary value problem \eqref{pr_porous} for a porous medium-type equation. From Theorem \ref{thm_por}
 and the boundedness \eqref{est_app} of $W,$ we obtain that for $t\in(t_{i},t_{i+1}]$,
\begin{equation}\label{est_app2}
\int_\Omega|W_\zeta|(t, \cdot)d\xi d\eta d\zeta\leq\int_\Omega|W_\zeta|(t_{i},\cdot)d\xi d\eta d\zeta+\int_D\Big[W(t,\xi,\eta,0)-W(t_{i},\xi,\eta,0)\Big]d\xi d\eta.
\end{equation}
Moreover, there exists a constant $\widetilde C_3$ depending on the domain $D$, the constant $\widetilde C_0$ given in \eqref{est_app}, $\|W_1\|_{C^3}$ and the $C^1$ estimates of the parameter in problem \eqref{pr_main}, such that
\begin{equation*}
\begin{split}
&\int_\Omega\Big[{|W_\xi|+|W_\eta|\over W^2}(1-\zeta)^2\Big](t, \cdot)d\xi d\eta d\zeta\\
&\leq
\int_\Omega\Big[{|W_\xi|+|W_\eta|\over W^2}(1-\zeta)^2\Big](t_{i},\cdot)d\xi d\eta d\zeta
+\widetilde C_3\Big(1+\int_\Omega|W_\zeta|(t_{i},\cdot)d\xi d\eta d\zeta\Big)\cdot(t-t_{i})\\
&\quad
+\widetilde C_3\int_{t_{i}}^t\int_\Omega\Big[{|W_\xi|+|W_\eta|\over W^2}(1-\zeta)^2\Big](s,\cdot)d\xi d\eta d\zeta ds.
\end{split}\end{equation*}
which implies that by using the Gronwall inequality,
\begin{equation}\label{est_app3}\begin{split}
&\int_\Omega\Big[{|W_\xi|+|W_\eta|\over W^2}(1-\zeta)^2\Big](t,\cdot)d\xi d\eta d\zeta\\
&\leq
e^{\widetilde C_3(t-t_{i})}\cdot\int_\Omega\Big[{|W_\xi|+|W_\eta|\over W^2}(1-\zeta)^2\Big](t_{i},\cdot)d\xi d\eta d\zeta\\
&\quad+(e^{\widetilde C_3(t-t_{i})}-1)\cdot\Big(1+\int_\Omega|W_\zeta|(t_{i},\cdot)d\xi d\eta d\zeta\Big).
\end{split}\end{equation}
 Combining \eqref{est_app2} with \eqref{est_app3}, it follows that
\begin{equation}\label{est_app4}\begin{split}
&\int_\Omega\Big[|W_\zeta|+{|W_\xi|+|W_\eta|\over W^2}(1-\zeta)^2\Big](t,\cdot)d\xi d\eta d\zeta\\
&\leq
e^{\widetilde C_3(t-t_{i})}\cdot\int_\Omega\Big[|W_\zeta|+{|W_\xi|+|W_\eta|\over W^2}(1-\zeta)^2\Big](t_{i},\cdot)d\xi d\eta d\zeta\\
&\quad+e^{\widetilde C_3(t-t_{i})}-1+\int_D\Big[W(t,\cdot)-W(t_{i},\cdot)\Big]\Big|_{\zeta=0}d\xi d\eta.
\end{split}\end{equation}

(2) When $t\in(t_{i},t_{i+1}]$ for odd $i$, we obtain
$W$ by the problem \eqref{pr_trans} for a transport equation. From Propositions \ref{prop_tr1}, \ref{prop_tr2} and the estimate \eqref{est_app}, it follows that there exists a constant $C_6$, depending on the domain $D$, the constant $\widetilde C_0$ given in \eqref{est_app}, $\|W_1\|_{C^3}$ and the $C^1$ estimates of the parameter in problem \eqref{pr_main}, such that for $t\in(t_{i},t_{i+1}]$,
\begin{equation*}
\begin{split}
&\int_\Omega\Big[|W_\zeta|+{|W_\xi|+|W_\eta|\over W^2}(1-\zeta)^2\Big](t,\cdot)d\xi d\eta d\zeta\\
&\leq
\int_\Omega\Big[|W_\zeta|+{|W_\xi|+|W_\eta|\over W^2}(1-\zeta)^2\Big](t_{i},\cdot)d\xi d\eta d\zeta+C_6(t-t_{i})\\
&\quad+C_6\int_{t_{i}}^t\int_\Omega\Big[|W_\zeta|+{|W_\xi|+|W_\eta|\over W^2}(1-\zeta)^2\Big](s, \cdot)d\xi d\eta d\zeta ds,
\end{split}\end{equation*}
which implies that by using the Gronwall inequality,
\begin{equation}\label{est_app6}\begin{split}
&\int_\Omega\Big[|W_\zeta|+{|W_\xi|+|W_\eta|\over W^2}(1-\zeta)^2\Big](t,\cdot)d\xi d\eta d\zeta\\
&\leq
e^{C_6(t-t_{i})}\cdot\int_\Omega\Big[|W_\zeta|+{|W_\xi|+|W_\eta|\over W^2}(1-\zeta)^2\Big](t_{i},\cdot)d\xi d\eta d\zeta+e^{C_6(t-t_{i})}-1.
\end{split}\end{equation}

(3) On the other hand, when $i$ is odd we have that from Proposition \ref{prop_tr},
\begin{equation}\label{def_z0}
W(t_{i+1},\xi,\eta,0)=W(t_{i},\xi,\eta,0)\cdot\exp
\Big\{-\int_{t_{i}}^{t_{i+1}}(B-b)(s,\xi,\eta,0)ds\Big\}.
\end{equation}

By combining \eqref{est_app4} , \eqref{est_app6} and \eqref{def_z0}, and letting $C_7=\max\{\widetilde C_3,C_6\}$, we obtain that for any $t\in(t_{i},t_{i+1}]$,
\begin{equation}\label{est_app7}
\begin{split}
&\int_\Omega\Big[|W_\zeta|+{|W_\xi|+|W_\eta|\over W^2}(1-\zeta)^2\Big](t,\cdot)d\xi d\eta d\zeta\\
&\leq
e^{C_7(t-t_{i})}\cdot\int_\Omega\Big[|W_\zeta|+{|W_\xi|+|W_\eta|\over W^2}(1-\zeta)^2\Big](t_{i},\cdot)d\xi d\eta d\zeta
\\&\quad
+e^{C_7(t-t_{i})}-1+G_{i}(t),
\end{split}
\end{equation}
where
\[\begin{split}G_{i}(t)&=\int_D\Big[W(t,\cdot)-W(t_{i},\cdot)\Big]\Big|_{\zeta=0}d\xi d\eta\\
&=\int_D\Big[W(t,\cdot)-W(t_{i-1},\cdot)\cdot\exp
\Big\{-\int_{t_{i-1}}^{t_i}(B-b)(s,\cdot)ds\Big\}\Big]\Big|_{\zeta=0}d\xi d\eta
\end{split}\]
for even $i$, and $G_{i}(t)=0$ for odd $i$. Hence,
\begin{equation}\label{est_G}
G_{i}(t)\leq 2\|W\|_{L^\infty}\cdot S(D),\qquad ~\forall t\in[0,T].\end{equation}

Therefore, \eqref{est_app7} implies that
\begin{equation}\label{est_app8}\begin{split}
&\int_\Omega\Big[|W_\zeta|+{|W_\xi|+|W_\eta|\over W^2}(1-\zeta)^2\Big](t,\cdot)d\xi d\eta d\zeta\\
&\leq
e^{C_7t}\cdot\int_\Omega\Big[|W_{0,\zeta}|+{|W_{0,\xi}|+|W_{0,\eta}|\over W_0^2}(1-\zeta)^2\Big]d\xi d\eta d\zeta+e^{C_7t}-1+G_{i}(t)+F_{i}(t),
\end{split}\end{equation}
where
\[\begin{split}
&F_{i}(t)=\sum_{j=0}^{i-1}
e^{C_7(t-t_{j+1})}G_j(t_{j+1})\\
&=\sum_{0\leq j\leq i-2,~j:odd}e^{C_7(t-t_{j+1})}\cdot
\int_D\Big[W(t_{j+1},\cdot)-W(t_{j-1},\cdot)\cdot\exp
\Big\{-\int_{t_{j-1}}^{t_j}(B-b)(s,\cdot)ds\Big\}\Big]\Big|_{\zeta=0}d\xi d\eta.
\end{split}\]

Note that $W_0|_{\zeta=0}>0$, then
\begin{equation}\label{est_F}\begin{split}
&\qquad F_{i}(t)\\
&\leq e^{2C_7{T\over n}}\Big\|W|_{\zeta=0}\Big\|_{L^\infty}\cdot S(D)-e^{C_7(t-t_2)}\int_D\Big[W_0(\cdot)\cdot
\exp\Big\{-\int_0^{t_1}(B-b)(s,\cdot)ds\Big\}\Big]\Big|_{\zeta=0}d\xi d\eta\\
&\quad+\sum_{k=1}^{[{i\over2}]-1}\int_De^{C_7(t-t_{2k})}\Big[W(t_{2k},\cdot)\cdot
\Big(
1-\exp\{-2C_7{T\over n}-\int_{t_{2k}}^{t_{2k+1}}(B-b)(s,\cdot)ds\}
\Big)\Big]\Big|_{\zeta=0}d\xi d\eta\\
&\leq \Big\|W|_{\zeta=0}\Big\|_{L^\infty}\cdot S(D)\cdot(e^{2C_7{T\over n}}+I),
\end{split}\end{equation}
where $[{i\over2}]$ is the largest integer less than or equal to ${i\over2}$, and
\[I=\sum_{k=1}^{[{i\over2}]-1}e^{C_7(t-t_{2k})}
\Big\|1-\exp\{-\int_{t_{2k}}^{t_{2k+1}}\Big[2C_7+(B-b)(s,\xi,\eta,0)\Big]ds\}\Big\|_{L^\infty(D)}.\]
By choosing a constant $C_7$ satisfying that $2C_7\geq\Big\|(B-b)|_{\zeta=0}\Big\|_{L^\infty}$, we obtain
\begin{equation}\label{est_I}\begin{split}
I&\leq \sum_{k=1}^{[{i\over2}]-1}e^{C_7{T\over n}(i-2k)}\cdot
\|\int_{t_{2k}}^{t_{2k+1}}\Big[2C_7+(B-b)(s,\xi,\eta,0)\Big]ds\|_{L^\infty(D)}\\
&\leq \sum_{k=1}^{[{i\over2}]-1}e^{C_7{T\over n}(i-2k)}\cdot4C_7{T\over n}\leq 4C_7t e^{C_7t}.
\end{split}\end{equation}
Plugging \eqref{est_G}, \eqref{est_F} and \eqref{est_I} into \eqref{est_app8}, it follows that there exists a positive constant $C_8$, depending on $T,S(D),\|W\|_{L^\infty}$ and $C_7$, such that
\begin{equation*}
\begin{split}
&\int_D\Big[|W_\zeta|+{|W_\xi|+|W_\eta|\over W^2}(1-\zeta)^2\Big](t,\cdot)d\xi d\eta\\
&\leq e^{C_7t}\int_D\Big[|W_{0,\zeta}|+{|W_{0,\xi}|+|W_{0,\eta}|\over W_0^2}(1-\zeta)^2\Big]d\xi d\eta+C_8,
\end{split}\end{equation*}
from which the estimate \eqref{est_app1} follows immediately by using \eqref{est_app}. Thus, we complete the proof of the lemma.
\end{proof}

We are now ready to give the proof of the existence of weak solution
as follows.

\begin{proof}[\bf Proof of Theorem \ref{thm_global}.] Denote by $W^n(t,\xi,\eta,\zeta)$  the approximate solution constructed in \eqref{pr_porous}-\eqref{pr_trans}.
Let $Y$ be the dual space of $H_0^2(\Omega)$. First, we claim that
\begin{equation}\label{est_H2}
\Big\|\pd_t\Big({(1-\zeta)^2\over W^n}\Big)\Big\|_{L^2(0,T;Y)}\leq C_9
\end{equation}
with a constant $C_9$ independent of $n$. Indeed, if $i$ is even, we have that from \eqref{pr_porous},
\[\begin{split}
&\int_{t_{i}}^{t_{i+1}}\Big\|\pd_t\Big({(1-\zeta)^2\over W^n}\Big)(t,\cdot)\Big\|^2_Ydt=4\int_{t_{i}}^{t_{i+1}}
\Big\|(1-\zeta)^2\Big[\pd_\zeta^2 W^n+A
\pd_\zeta(\frac{1}{W^n})-{b\over W^n}\Big](t,\cdot)\Big\|^2_Ydt\\
&=4\int_{t_{i}}^{t_{i+1}}\Big(\sup_{\|\psi\|_{H_0^2(\Omega)}\leq1}
\int_\Omega\Big[\pd_\zeta^2 W^n+A
\pd_\zeta
(\frac{1}{W^n})-{b\over W^n}\Big](1-\zeta)^2\cdot \psi d\xi d\eta d\zeta\Big)^2dt\\
&\leq C_{10}\int_{t_{i}}^{t_{i+1}}\Big[\|W^n(t,\cdot)\|_{L^\infty}+\|{1-\zeta\over W^n(t,\cdot)}\|_{L^\infty}\Big]dt,
\end{split}\]
with a uniform constant $C_{10}$ depending only on $D$, $\|A\|_{C^1}$ and $\|b\|_{L^\infty}$.

If $i$ is odd, it follows that by using \eqref{pr_trans},
\[\begin{split}
&\int_{t_{i}}^{t_{i+1}}\Big\|\pd_t \Big({(1-\zeta)^2\over W^n}\Big)(t,\cdot)\Big\|^2_Ydt=4\int_{t_{i}}^{t_{i+1}}
\Big\|(1-\zeta)^2\Big[-\zeta U(\pd_\xi+k
\pd_\eta)
 ({1\over W^n})+{B-b\over W^n}\Big](t,\cdot)\Big\|^2_Ydt\\
&=4\int_{t_{i}}^{t_{i+1}}\Big(\sup_{\|\psi\|_{H_0^2(\Omega)}\leq1}
\int_\Omega\Big[-\zeta U(\pd_\xi+k
\pd_\eta)
 ({1\over W^n})+{B-b\over W^n}\Big](1-\zeta)^2\cdot \psi d\xi d\eta d\zeta\Big)^2dt\\
&\leq C_{11}\int_{t_{i}}^{t_{i+1}}\|{1-\zeta\over W^n(t,\cdot)}\|_{L^\infty}dt,
\end{split}\]
with a uniform constant $C_{11}$ depending only in $D$, $\|U\|_{C^1}$ and $\|B-b\|_{L^\infty}$.
Thus, by using \eqref{est_app},  the estimate \eqref{est_H2} follows immediately.

Next, from \eqref{est_app} and Lemma \ref{lem_app1}, we know that
\[\Big\|{(1-\zeta)^2\over W^n}\Big\|_{L^\infty(0,T;W^{1,1}(\Omega))}\leq C_{12},\]
where $C_{12}$ is a positive constant independent of $n$. Hence, by using the Lions-Aubin Lemma (see \cite{BS} for instance),
we conclude that $\Big\{{(1-\zeta)^2\over W^n}\Big\}$ is compact in $L^2\big((0,T)\times\Omega\big)$. Therefore, we may assume that
\[{(1-\zeta)^2\over W^n}\rightarrow {(1-\zeta)^2\over W},\quad in~L^2\big((0,T)\times\Omega\big),\]
and then
\[W^n\rightarrow W,\quad~{\rm a.e.}~in ~(0,T)\times\Omega.\]
In particular,
\[W^n\rightarrow W,\quad in~L^2\big((0,T)\times\Omega\big).\]
Thus, for any $\psi\in C_0^\infty\big((0,T)\times\Omega\big)$, we have that
\begin{equation}\label{lim}\begin{split}
&\int_0^T\int_\Omega\Big[({1\over W^n})_t+\zeta U({1\over W^n})_\xi+\zeta Uk({1\over W^n})_\eta-{B-b\over W^n}\Big]\psi d\xi d\eta d\zeta ds\\
&=2\sum_{i:odd}\int_{t_{i}}^{t_{i+1}}\int_\Omega\Big[-\pd_\zeta^2 W^n-A({1\over W^n})_\zeta+\zeta U({1\over W^n})_\xi+\zeta Uk({1\over W^n})_\eta
+{2b-B\over W^n}\Big]\psi d\xi d\eta d\zeta ds.
\end{split}\end{equation}
From Theorem \ref{thm_por} and Remark \ref{rem_tr},
and by integrating by parts in the above equality \eqref{lim} it yields that,
\begin{equation}\label{lim1}\begin{split}
&\int_0^T\int_\Omega{1\over W^n}\Big[\psi_t+\zeta (U\psi)_\xi+\zeta (Uk\psi)_\eta-(B-b)\psi\Big]d\xi d\eta d\zeta ds\\
&=2\sum_{i:odd}\int_{t_{i-1}}^{t_i}\int_\Omega\Big[ W^n \psi_{\zeta\zeta}+{1\over W^n}\Big(-(A\psi)_\zeta+\zeta (U\psi)_\xi+\zeta (Uk\psi)_\eta
+(2b-B)\psi\Big)\Big] d\xi d\eta d\zeta ds.
\end{split}\end{equation}
Letting $n\rightarrow \infty$ in \eqref{lim1} and noting that $t_i-t_{i-1}={T\over n}$, we have
\begin{equation*}
\begin{split}
&\int_0^T\int_\Omega{1\over W}\Big[\psi_t+\zeta (U\psi)_\xi+\zeta (Uk\psi)_\eta-(B-b)\psi\Big]d\xi d\eta d\zeta ds\\
&=\int_{0}^{T}\int_\Omega\Big[ W \psi_{\zeta\zeta}+{1\over W}\Big(-(A\psi)_\zeta+\zeta (U\psi)_\xi+\zeta (Uk\psi)_\eta
+(2b-B)\psi\Big)\Big] d\xi d\eta d\zeta ds,
\end{split}\end{equation*}
which implies
\begin{equation}\label{lim3}\begin{split}
&\int_0^T\int_\Omega\Big\{{1\over W}\Big[\psi_t+\zeta (U\psi)_\xi+\zeta (Uk\psi)_\eta+(A\psi)_\zeta-B\psi\Big]-W\psi_{\zeta\zeta}\Big\}d\xi d\eta d\zeta ds=0.
\end{split}\end{equation}
Therefore, from \eqref{lim3} we know that the function $W$ satisfies the equation of problem \eqref{pr_main} in the sense of distribution. Moreover, we can obtain that $W$ satisfies the estimate \eqref{est_app1} by letting $n\rightarrow\infty$, which implies that
\[W\in L^\infty\big(0,T;BV(\Omega)\big).\]

It remains to verify that $W$ satisfies the initial and  boundary conditions given in
\eqref{pr_main}.
It follows from Lemma \ref{lem_app} that,
\[\lim_{\zeta\rightarrow1}W(t,\xi,\eta,\zeta)=0,\quad {\rm a.e.}~in~(0,T)\times D.\]
We can verify the other boundary conditions in \eqref{pr_main} for $W$ in the sense of distribution, respectively,
through similar process as above to show that $W$ satisfies the equation of \eqref{pr_main} in the sense of distribution. For example, we have the following equality holds
\begin{equation*}
\begin{split}
&\int_0^T\int_\Omega{1\over W}\Big[\psi_t+\zeta (U\psi)_\xi+\zeta (Uk\psi)_\eta-(B-b)\psi\Big]d\xi d\eta d\zeta ds\\
&={1\over2}\int_{0}^{T}\int_\Omega\Big[ W \psi_{\zeta\zeta}+{1\over W}\Big(-(A\psi)_\zeta+\zeta (U\psi)_\xi+\zeta (Uk\psi)_\eta\\
&\qquad\qquad\qquad+(2b-B)\psi\Big)\Big] d\xi d\eta d\zeta ds-{1\over2}\int_0^T\int_D{p_x\over U}\cdot {\psi\over W}|_{\zeta=0}d\xi d\eta ds,
\end{split}\end{equation*}
for any
\[\psi\in C_0^\infty\big((0,T)\times D\times(-1,1)\big)\]
with $\psi_\zeta|_{\zeta=0}=0$. Therefore, we complete the proof of Theorem \ref{thm_global}.

\end{proof}

\vspace{.15in}

{\bf Acknowledgements:}
The first two authors' research was supported in part by
National Natural Science Foundation of China (NNSFC) under Grant No. 91230102, and the second author's research was also supported by Shanghai Committee of Science and Technology under Grant No. 15XD1502300. The last author's research was supported by the General Research Fund of Hong Kong,
CityU No. 103713.

\end{document}